\newtheorem{theorem}{Theorem}[section]
\newtheorem{cor}[theorem]{Corollary}
\newtheorem{lem}{Lemma}
\newtheorem*{rem}{Remark}
\newcommand{\Mod}[1]{\ (\mathrm{mod}\ #1)}
\newcommand{\lcm}{{\rm lcm}\hspace{0.05cm}}
\title{Large gaps between values of several binary quadratic forms}
\author{B\l{}a\.zej \.Zmija}
\address{Charles University, Faculty of Mathematics and Physics, Department of Mathematical Analysis, and Department of Algebra, Sokolov\-sk\' a 83, 18600 Praha~8, Czech Republic \newline
and Institute of Mathematics of the Polish Academy of Sciences, \'{S}niadeckich 8, 00-656 Warsaw, Poland}
\email{blazej.zmija@gmail.com}
\keywords{gaps between values of quadratic forms, Pólya-Vinogradov inequality}
\subjclass[2010]{11N25, 11N37}
\thanks{The author was supported by Czech Science Foundation grant No. 21-00420M, Charles University Research Centre program UNCE/SCI/022, and PRIMUS Research Programme PRIMUS/25/SCI/017. %The project started during author's visit at Graz University of Technology financed by the Kartezjusz program funded by the Polish National Centre for Research and Development, grant No. POWR.03.02.00-00-I001/16-00.
}
\begin{document}

\begin{abstract}
In this paper we study the problem of long gaps between values of binary quadratic forms. Let $D_{1}$, $D_{2},\ldots ,D_{r}$ be negative integers and $(s_{n})_{n=1}^{\infty}$ be the sequence of all the numbers representable by any binary quadratic form of discriminant $D_{1}$, $D_{2}$, $\ldots$ or $D_{r}$, and let $d :=\lcm\{D_{1},\ldots ,D_{r}\}$. We show that then
\begin{align*}
\limsup_{n\to\infty}\frac{s_{n+1}-s_{n}}{\log s_{n}}\geq \frac{1}{\log d + \log\log d + \log\log\log d + 4}.
\end{align*}
This improves and generalises a result by Dietmann, Elsholtz, Kalmynin, Konyagin, and Maynard.

As a by-product of our preliminary results, we show an improvement to the Pólya-Vinogradov inequality.
\end{abstract}

\maketitle

\section{Introduction}

The problem of long gaps between values of binary quadratic forms (usually in the case of sums of two squares) attracted the attention of many mathematicians, including Erd\H{o}s \cite{Erd}, Bambah and Chowla \cite{BC}, Hooley \cite{Hoo}, and Maynard \cite{May}. For many years the best result in this direction was the following. If $\mathcal{S}=(s_{n})_{n=1}^{\infty}$ is the increasing sequence of all the positive integers representable by any binary quadratic form $Q(x,y)=ax^{2}+bxy+cy^{2}$ of discriminant $D:=b^{2}-4ac<0$ then
\begin{align*}
\limsup_{n\to\infty} \frac{s_{n+1}-s_{n}}{\log s_{n}}\geq \frac{1}{|D|}.
\end{align*}
It comes from very short and elegant paper by Richards \cite{Richards}. Very recently, Dietmann, Elsholtz, Kalmynin, Konyagin and Maynard \cite[Theorem 2]{DEKKM} improved on Richards' work and showed that in the special case of sums of two squares we have
\begin{align*}
\limsup_{n\to\infty} \frac{s_{n+1}-s_{n}}{\log s_{n}}\geq \frac{390}{449},
\end{align*}
and in general:
\begin{align*}
\limsup_{n\to\infty} \frac{s_{n+1}-s_{n}}{\log s_{n}}\geq C_{D},
\end{align*}
where
\begin{align}\label{EquCD}
C_{D}=\frac{|D|-1}{2|D| \left(\log\varphi (|D|) +1\right)} \hspace{1cm} \textrm{ or } \hspace{1cm} C_{D}=\frac{|D|}{2\varphi(|D|)\left(\log |D| + O((\log\log |D|)^{2})\right)}.
\end{align}
Here $\varphi(n)$ denotes Euler's totient function. However, there was a typo in their computations. In the proof of \cite[Theorem 2]{DEKKM} (more precisely, on the bottom of page 11334 and the top of page 11335) they bound a certain quantity $\alpha$ from above by an expression containing a quantity $R(|D|)$. They claim that $R(|D|)=\sum_{i=1}^{t}\frac{1}{l_{i}}$ (we define $t$ and $l_{i}$ later in the proof of our Theorem \ref{ThmGaps}), whereas from their reasoning it would only follow that the same bound for $\alpha$ is true but with a slightly worse expression for $R(|D|)$:
\begin{align*}
R(|D|)=1+\frac{l_{t}}{t}\sum_{i=1}^{t-1}i\left(\frac{1}{l_{i}}-\frac{1}{l_{i+1}}\right) = \frac{|D|-1}{\varphi (|D|)}\sum_{i=1}^{t}\frac{1}{l_{i}}.
\end{align*}
By following the rest of their argument, one would obtain:
\begin{align}\label{DEKKM1}
\limsup_{n\to\infty} \frac{s_{n+1}-s_{n}}{\log s_{n}}\geq \frac{\varphi(|D|)}{2|D| \left(\log\varphi (|D|) +1\right)},
\end{align}
and 
\begin{align}\label{DEKKM2}
\limsup_{n\to\infty} \frac{s_{n+1}-s_{n}}{\log s_{n}}\geq \frac{|D|}{2(|D|-1)\left(\log |D| + O((\log\log |D|)^{3}\right)} > \frac{1}{2\log |D| + O((\log\log |D|)^{3})}.
\end{align}
These inequalities are worse than the ones with quantities \eqref{EquCD} by the factor $\frac{|D|-1}{\varphi (|D|)}$.

The problem of long gaps between values of binary quadratic forms received a lot of attention outside number theory when Mallet-Paret and Sell in the paper \cite{MS} used Richards' ideas from \cite{Richards} and applied them to study differential equations. They showed that the gaps in the sequence of positive integers representable by any form with discriminant $D_{1}$, $D_{2},\ldots$ or $D_{r}$ can be arbitrary large (an explicit estimate was later obtained by Gal and Guo in \cite{GG}). The results by Richards \cite{Richards}, and Dietmann \emph{et al.} \cite{DEKKM} deal only with the case of $r=1$.

Here we compile the ideas from \cite{MS} and \cite{DEKKM} to prove the following theorem.

\begin{theorem}\label{ThmGaps}
Let $\mathcal{D}$ be a finite nonempty set of integers with the property that $\prod_{D\in\mathcal{A}}D$ is not a perfect square whenever $\mathcal{A}\subseteq\mathcal{D}$ has odd cardinality. Let $\mathcal{S}=(s_{1}, s_{2},\ldots )$ be the sequence of positive integers, in increasing order, representable by any binary quadratic form of any discriminant $D\in\mathcal{D}$. Let $d:=\lcm\{\ |D| \ |\ D\in\mathcal{D}\ \}$. Then
\begin{align*}
\limsup_{n\to \infty}\frac{s_{n+1}-s_{n}}{\log s_{n}} \geq \frac{1}{\log d + \log\log d + \log\log\log d + 4}.
\end{align*}
\end{theorem}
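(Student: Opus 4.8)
The plan is to produce, for infinitely many $N$, a block of consecutive integers $N+1,\dots,N+L$ with $L\gg \log N/(\log d+\log\log d+\log\log\log d+4)$, none of which lies in $\mathcal{S}$. The engine is the elementary necessary condition for representability: if a prime $p\nmid 2D$ satisfies $\left(\tfrac{D}{p}\right)=-1$ (so $p$ is inert in the order of discriminant $D$) and $v_p(n)$ is odd, then there is no ideal of norm $n$, hence $n$ is represented by no form of discriminant $D$. Thus to certify $n\notin\mathcal{S}$ it suffices to find, for every $D\in\mathcal{D}$, such an obstructing prime dividing $n$ to an odd power. I would exploit the most economical device: a single \emph{universally obstructing} prime $p$, with $\left(\tfrac{D}{p}\right)=-1$ for all $D\in\mathcal{D}$ at once, since then one odd-power factor $p$ removes $n$ from $\mathcal{S}$ entirely. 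The first step is to show such primes fill a positive proportion of the residue classes modulo $4d$. By quadratic reciprocity each map $p\mapsto\left(\tfrac{D}{p}\right)$ is a quadratic Dirichlet character $\chi_D$ modulo a divisor of $4d$, and a sign pattern $(\varepsilon_D)_{D\in\mathcal{D}}$ is realised by a positive density of primes exactly when it respects every multiplicative relation among the $\chi_D$. A relation is a subset $\mathcal{A}\subseteq\mathcal{D}$ with $\prod_{D\in\mathcal{A}}D$ a perfect square, and for the all-$(-1)$ pattern the compatibility requirement $(-1)^{|\mathcal{A}|}=1$ forces every such $\mathcal{A}$ to have even cardinality. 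This is precisely the hypothesis of the theorem, so the hypothesis is exactly what guarantees a positive density of universally obstructing primes; orthogonality of characters then quantifies how many classes modulo $4d$ they occupy.

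Next I would assemble the gap by a covering/CRT argument. The admissible classes modulo $4d$ single out an increasing sequence of universally obstructing primes $l_1<l_2<\cdots$, from which the moduli $l_1,\dots,l_t$ of the proof are selected. Assigning to each $l_i$ a residue $c_i$, the single congruence $N\equiv -c_i\pmod{l_i}$ forces $l_i\mid N+j$ for every $j\equiv c_i\pmod{l_i}$; choosing the classes so that $\bigcup_i\{\,j: j\equiv c_i\pmod{l_i}\,\}$ covers $\{1,\dots,L\}$ makes every $N+j$ divisible by a universally obstructing prime. By the Chinese Remainder Theorem $N$ is then pinned modulo $\prod_i l_i$, so one may take $\log N\approx\sum_i\log l_i$ and obtain a gap of length $\ge L$. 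The ratio $L/\log N$ is governed by how efficiently the chosen classes tile the interval, which is controlled by the reciprocal sum $\sum_{i\le t}1/l_i$; the dependence on $d$ enters because the obstructing primes are trapped in fixed classes modulo $4d$, which caps their density through Mertens' theorem and hence the covering efficiency. Optimising the truncation $t$ and the moduli $l_i$ is the combinatorial heart of the argument and is what converts $\sum 1/l_i$ into the denominator $\log d+\log\log d+\log\log\log d+4$.

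Two quantitative points carry the real weight. The first is forcing the exponents $v_{l_i}(N+j)$ to be \emph{odd} without paying a spurious factor of two: guaranteeing $v_{l_i}(N+j)=1$ by prescribing $N$ modulo $l_i^2$ would double $\log N$ to $2\sum_i\log l_i$ and reproduce exactly the factor $2\log|D|$ of the corrected bound \eqref{DEKKM2}. The improvement should come from prescribing $N$ only modulo $l_i$ and showing that, as $N$ ranges over the CRT progression, one can select $N$ for which $l_i^2\nmid N+j$ at all but a negligible set of $j$, the exceptions being repaired with a few spare obstructing primes. The second point is making the count of obstructing primes in the relevant ranges and residue classes sharp enough to recover the secondary terms $\log\log d$ and $\log\log\log d$; this is where the improved Pólya–Vinogradov inequality of the preliminary sections is used, bounding the character sums $\sum_{n\le x}\chi_D(n)$ more tightly than the classical $\sqrt{q}\log q$ and so shrinking the error in the optimisation. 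I expect the main obstacle to be exactly this interface: arranging the covering so that both the odd-exponent correction and the sparsity of universally obstructing primes cost only lower-order terms, and then marrying the prime-distribution estimates to the character-sum bounds precisely enough to land the denominator at $\log d+\log\log d+\log\log\log d+4$ rather than at a larger multiple of $\log d$.
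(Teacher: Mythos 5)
Your strategy cannot reach the stated bound, and the obstruction is quantitative, not technical. You propose to build the gap from a covering system: pick ``universally obstructing'' primes $l_1<l_2<\cdots$ (inert for every $D\in\mathcal{D}$ simultaneously), assign one residue class $c_i$ modulo each $l_i$, cover $\{1,\dots,L\}$ by these classes, and pin $N$ by CRT. But the universally obstructing primes have relative density $\rho\le 1/2$ inside the primes (exactly $1/2$ already for a single discriminant), so by Mertens' theorem in arithmetic progressions, after deploying \emph{all} such primes up to $z$ the uncovered part of $\{1,\dots,L\}$ still has size about $L\prod_{l\le z}\left(1-\frac{1}{l}\right)\asymp L(\log z)^{-\rho}$, and each surviving element must then be handled by its own, previously unused, obstructing prime. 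Whichever way one balances $z$, the total cost $\sum_i\log l_i$ (which is essentially $\log N$) comes out $\gg L(\log L)^{1-\rho}$, so the ratio $L/\log N$ tends to $0$: no positive constant, let alone $1/(\log d+\log\log d+\log\log\log d+4)$, can emerge. This is precisely the pre-1982 barrier --- Erd\H{o}s-type covering arguments give gaps of size only $o(\log n)$ for sums of two squares --- and Richards' construction, which the paper follows, was invented exactly to bypass it. A symptom of the same confusion is your claim that the covering efficiency is governed by $\sum_i 1/l_i$ over obstructing \emph{primes}: in the paper that sum runs over the $\varphi(d)$ reduced \emph{residues} $l_i$ modulo $d$, it is bounded by Corollary \ref{MAINCOR} by $\frac{\varphi(d)}{d}\big(\log d+\log\log d+\log\log\log d+4\big)$, and that is the sole source of the denominator; a sum of reciprocals of primes grows like $\rho\log\log z$ and has no relation to $\log d$.

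What the paper actually does avoids any covering. Lemma \ref{LemNonRes} produces a single integer $r$ (no primality needed) with $\left(\frac{D}{r}\right)=-1$ for all $D\in\mathcal{D}$; then \emph{every} number $dj+r$, $0\le j\le k$, automatically contains, for each $D$, a prime factor $p$ with $\left(\frac{D}{p}\right)=-1$ to an odd power, by multiplicativity and periodicity of the Kronecker symbol --- the ``covering'' is free, though the obstructing prime varies uncontrollably with $j$ and $D$. The whole construction is therefore one congruence $dy\equiv r\pmod{P_\delta}$, where $P_\delta$ contains every prime power that could possibly carry the obstruction: primes $p\le L/l_t$ coprime to $d$ to the power $\beta_p+1$, primes in $(L/l_{i+1},L/l_i]$ only in the $i$ classes $T_i$, and primes $>\delta L$ only to the first power; accidental divisibility by $p^2$ for large $p$ is removed by a pigeonhole over the $\delta L$ translated intervals $I_n$ (this, not a choice of $N$ inside a CRT progression, is how the factor $2$ of \eqref{DEKKM2} is saved). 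The cost $\log P_\delta$ is then estimated by the prime number theorem in progressions plus Corollary \ref{MAINCOR}. Finally, your assertion that the improved P\'olya--Vinogradov inequality is an ingredient here is wrong in both directions: Theorem \ref{ThmPolVino} is a by-product of Corollary \ref{MAINCOR}, and it is used nowhere in the proof of Theorem \ref{ThmGaps}.
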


\begin{rem}
Suppose that we consider gaps between the values attained by a finite number of positive definite binary quadratic forms and $\mathcal{D}$ is the set of their discriminants. Then every $D\in\mathcal{D}$ is negative, so the property that $\prod_{D\in\mathcal{A}}D$ is not a perfect square for $\mathcal{A}\subseteq\mathcal{D}$ of odd cardinality is satisfied because all such products are negative.
\end{rem}

It is worth to noting that our result is fully explicit, improves the inequalities \eqref{DEKKM1} and \eqref{DEKKM2} by a factor of $2$ and gives  smaller error term in the denominator. 

In our proof, we obtain a precise description of the sum of reciprocals of numbers coprime to a given number. We show in Theorem \ref{MAINTHM} that for any fixed $N\geq 2$ and $x>1$,
\begin{align*}
\sum_{\substack{1\leq k\leq x \\ (k,N)=1 }}\frac{1}{k} = \frac{\varphi (N)}{N}\left(\log x + \gamma + \sum_{p\mid N}\frac{\log p}{p-1}\right) + E_{N}(x),
\end{align*}  
where $|E_{N}(x)|\leq \frac{\sigma_{1}^{*}(N)}{8x^{2}}$ and $\sigma_{1}^{*}(N)$ the sum of square-free divisors of $N$. This sum appears (with a different error term) in \cite[2.1.1 Exercise 16(b)]{MV}. Interestingly, we have found one more application of it, which is a refinement of the well-known Pólya-Vinogradov inequality.

\begin{theorem}\label{ThmPolVino}
Let $k\geq 5$ and $\chi$ be a nonprincipal character modulo $q$. Then for every $x$ we have
\begin{align*}
\left|\sum_{m\leq x}\chi (m)\right|\leq \frac{\varphi(q)}{\sqrt{q}}\big(\log q + \log\log q + \log\log\log q + 4\big) .
\end{align*}
\end{theorem}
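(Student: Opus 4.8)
The plan is to prove Theorem~\ref{ThmPolVino} by starting from the classical derivation of the Pólya–Vinogradov inequality via Gauss sums, but keeping track of the constants carefully and then inserting the sharp estimate for the sum of reciprocals of integers coprime to $q$ (Theorem~\ref{MAINTHM}) at the crucial step. First I would recall that for a nonprincipal character $\chi$ modulo $q$, one can write $\chi(m)$ in terms of additive characters using the (normalised) Gauss sum: when $\chi$ is primitive one has the clean identity $\chi(m) = \frac{1}{\tau(\overline{\chi})}\sum_{a\bmod q}\overline{\chi}(a)\, e(am/q)$ with $|\tau(\overline{\chi})| = \sqrt{q}$. Summing over $m\leq x$ and exchanging the order of summation reduces the problem to bounding a sum over $a$ of a geometric progression in $e(a/q)$, and the key inequality $\left|\sum_{m\leq x} e(am/q)\right| \leq \min\bigl(x, \tfrac{1}{2\|a/q\|}\bigr) \leq \frac{1}{2\sin(\pi a/q)}$ collapses everything to a bound of the shape
\begin{align*}
\left|\sum_{m\leq x}\chi(m)\right| \leq \frac{1}{\sqrt{q}}\sum_{a=1}^{q-1}\frac{1}{2\sin(\pi a/q)}.
\end{align*}

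Next I would convert the sum of $\csc(\pi a/q)$ into something that Theorem~\ref{MAINTHM} can control. The standard crude bound $\frac{1}{2\sin(\pi a/q)} \leq \frac{q}{2a}$ (using $\sin t \geq t/(\ldots)$ type estimates, more precisely $\sin(\pi a/q)\geq a/q$ for $a\le q/2$ together with symmetry) would only give a harmonic sum over \emph{all} $a<q$, producing the classical $\log q$ with a worse constant. The whole point of the refinement is that we should only need to sum over $a$ coprime to $q$: this is where primitivity and the multiplicative structure of the Gauss sum matter, so I would either restrict to primitive characters and reduce the general case to the primitive one, or argue directly that the relevant $a$'s can be taken coprime to the conductor. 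Once restricted to $(a,q)=1$, the sum becomes (up to the symmetric pairing $a \leftrightarrow q-a$) essentially $q\sum_{(a,q)=1,\,a\le q/2}\frac{1}{a}$, and plugging in Theorem~\ref{MAINTHM} with $N=q$ and $x$ of order $q/2$ yields the main term $\frac{\varphi(q)}{q}\bigl(\log q + \gamma + \sum_{p\mid q}\frac{\log p}{p-1}\bigr)$, hence the factor $\frac{\varphi(q)}{\sqrt q}$ out front and a logarithmic term inside.

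The remaining work is to show that the secondary terms $\gamma$, $\sum_{p\mid q}\frac{\log p}{p-1}$, the error term $E_q$, and the losses incurred when passing from $\csc$ to $1/a$ and when handling the endpoint near $a = q/2$ can all be absorbed into the $\log\log q + \log\log\log q + 4$ slack, for $q$ (equivalently $k$) at least $5$. I expect the \textbf{main obstacle} to be exactly this constant-chasing: the prime-sum $\sum_{p\mid q}\frac{\log p}{p-1}$ is the largest of these secondary terms, and bounding it by something like $\log\log q + O(1)$ uniformly in $q$ requires a Mertens-type estimate on the prime divisors of $q$ (its worst case being $q$ a product of the first few primes). Reconciling that bound, together with the approximation error in replacing $\sin(\pi a/q)$ by its linear lower bound and the treatment of small and large $a$, against the tight target $\log q + \log\log q + \log\log\log q + 4$ is the delicate part; the small cases (accounting for the hypothesis $k\ge 5$) would then be checked so that the stated clean inequality holds for \emph{every} $x$ and every nonprincipal $\chi$ modulo $q$.
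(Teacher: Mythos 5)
Your outline is essentially the paper's own proof: expand $\chi(m)$ via the Gauss sum for primitive $\chi$ (reducing the general nonprincipal case to the primitive one), bound the inner geometric sums by $q/(2a)$ after pairing $a\leftrightarrow q-a$, and feed the resulting sum of reciprocals of residues coprime to $q$ into the sharp estimate from Section 2. Two points in your plan deserve correction, both making the task easier than you expect. First, the restriction to $(a,q)=1$ is not a delicate step requiring ``primitivity and the multiplicative structure of the Gauss sum'': in your displayed inequality you dropped the factor $|\overline{\chi}(a)|$ that the triangle inequality leaves behind, and simply keeping it restricts the sum to coprime $a$ for free, since $\chi(a)=0$ whenever $(a,q)>1$. (As written, your display sums $\csc(\pi a/q)$ over \emph{all} $a$, which can only reproduce the classical bound.) Second, the ``main obstacle'' you identify --- absorbing $\gamma+\sum_{p\mid q}\frac{\log p}{p-1}$ and the error term into $\log\log q+\log\log\log q+4$ --- does not need to be redone from Theorem \ref{MAINTHM}: it is exactly the content of Corollary \ref{MAINCOR} (applied with $N=q\geq 5$), which the paper has already proved via the Rosser--Schoenfeld/Mertens-type estimates you anticipate. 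With those two adjustments --- keep $|\overline{\chi}(a)|$, restrict to $a\leq q/2$ by symmetry, use $|\sum_{m\leq x}e(am/q)|\leq q/(2a)$, enlarge the coprime sum to $a\leq q$, and cite Corollary \ref{MAINCOR} --- your proposal coincides with the proof in the paper.
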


The original Pólya-Vinogradov inequality gives the bound $<\sqrt{q}\log q$, so we have improved it by the factor $\frac{\varphi(q)}{q}=\prod_{p\mid q}\left(1-\frac{1}{q}\right)$. Although this improvement seems not to be complicated (and might be known to the experts), we could not find it in the literature. For that reason, let us note some corollaries from Theorem \ref{ThmPolVino}.

During the last years, a lot of work was put into proving explicit estimates of the form
\begin{align}\label{PolVinIneqc}
\left|\sum_{m\leq x}\chi (m)\right|\leq (c+o(1))\sqrt{q}\log q,
\end{align}
with the $o(1)$ term given explicitly, see for example papers by Pomerance \cite{Pom}, and Frolenkov and Soundararajan \cite{FS}. According to our best knowledge, the current record belongs to Bordignon \cite{B}, and Bordignon and Kerr \cite{BK} who proved \eqref{PolVinIneqc} with the following constant $c$:
\begin{align}\label{Equc}
c = \left\{\begin{array}{ll}
3/(8\pi), & \textrm{if } \chi (-1)=-1, \\
3/(4\pi^{2}), & \textrm{if } \chi (-1)=1, \\
1/(4\pi), & \textrm{if } \chi (-1)=-1\textrm{ and } q \textrm{ is squarefree}, \\
1/(2\pi^{2}), & \textrm{if } \chi (-1)=1\textrm{ and } q \textrm{ is squarefree}.
\end{array}\right.
\end{align}

Theorem \ref{ThmPolVino} can be used to prove some results in this direction. More precisely, we will show that \eqref{PolVinIneqc} holds with arbitrarily small constant $c$ if only we restrict to numbers $q$ that have many small prime factors.

It is also believed that in fact for every $q$ the property $\left|\sum_{m\leq x}\chi (m)\right|=o(\sqrt{q}\log q)$ holds for all nonprincipal characters $\chi$ modulo $q$. Our Corollary \ref{PolVinIneqCor}(2) below implies that this is true for infinitely many numbers $q$. It is worth emphasising that the result works without any further assumption on $\chi$. 

Recall two facts that will be useful. Mertens' theorem \cite{Mert} says that
\begin{align}\label{MertensThm}
\prod_{p\leq x}\left(1-\frac{1}{p}\right) \sim \frac{1}{e^{\gamma}\log x},
\end{align}
and Nicolas \cite{Nico} showed that inequality
\begin{align}\label{NicolasThm}
\varphi (q) < \frac{q}{e^{\gamma} \log\log q}
\end{align}
holds for infinitely many numbers $q$, where $\gamma$ is the Euler constant. 

\begin{cor}\label{PolVinIneqCor}
\begin{enumerate}
\item For every constant $c>0$ there exists a set $\mathcal{N}_{c}\subseteq\mathbb{N}$ of positive natural density such that for all $q\in \mathcal{N}_{c}$ and all nonprincipal characters $\chi$ modulo $q$ we have
\begin{align*}
\left|\sum_{m\leq x}\chi (m)\right| < c\sqrt{q}\big(\log q + \log\log q + \log\log\log q + 4\big) =(c+o(1))\sqrt{q}\log q.
\end{align*}
\item There are infinitely many numbers $q$ such that for every nonprincipal character $\chi$ modulo $q$:
\begin{align*}
\left|\sum_{m\leq x}\chi (m)\right| < \frac{\sqrt{q}}{e^{\gamma}\log\log q}\big(\log q + \log\log q + \log\log\log q + 4\big) = (e^{-\gamma}+o(1))\frac{\sqrt{q}\log q}{\log\log q}.
\end{align*}
\end{enumerate}
\end{cor}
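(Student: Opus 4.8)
The plan is to combine Theorem \ref{ThmPolVino} with two different families of moduli $q$ for which the Euler factor $\varphi(q)/q$ is small. The common starting point for both parts is to factor $\varphi(q)/q$ out of the bound of Theorem \ref{ThmPolVino}, rewriting it as
\[
\left|\sum_{m\leq x}\chi(m)\right| \leq \frac{\varphi(q)}{q}\,\sqrt{q}\,\big(\log q + \log\log q + \log\log\log q + 4\big),
\]
valid for every nonprincipal character $\chi$ modulo $q$ and every $x$. Since $\log\log q + \log\log\log q + 4 = o(\log q)$, the right-hand side is $\left(\tfrac{\varphi(q)}{q}+o(1)\right)\sqrt{q}\log q$, so everything reduces to controlling $\frac{\varphi(q)}{q} = \prod_{p\mid q}\left(1-\tfrac{1}{p}\right)$ along the relevant moduli.

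For part (1) I would invoke Mertens' theorem \eqref{MertensThm} to choose a threshold $y$ with $\prod_{p\leq y}\left(1-\tfrac1p\right) < c$; such a $y$ exists because the product tends to $0$. Setting $P := \prod_{p\leq y}p$ and $\mathcal{N}_c := \{\,q\in\mathbb{N} : P\mid q\,\}$, the set $\mathcal{N}_c$ has natural density $1/P > 0$. For every $q\in\mathcal{N}_c$ all primes $\leq y$ divide $q$, and since each factor $1-\tfrac1p$ is less than $1$, adjoining further prime divisors only decreases the product; hence $\frac{\varphi(q)}{q} \leq \prod_{p\leq y}\left(1-\tfrac1p\right) < c$. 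Substituting this into the displayed bound yields the claimed strict inequality, and the asymptotic reformulation follows from the $o(\log q)$ remark above.

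For part (2) the required input is already supplied by Nicolas' inequality \eqref{NicolasThm}: dividing $\varphi(q) < q/(e^{\gamma}\log\log q)$ by $q$ gives $\frac{\varphi(q)}{q} < \frac{1}{e^{\gamma}\log\log q}$ for infinitely many $q$. Feeding this estimate into the displayed bound produces exactly the desired inequality for those $q$, and the reformulation $(e^{-\gamma}+o(1))\frac{\sqrt{q}\log q}{\log\log q}$ again follows from $\log\log q + \log\log\log q + 4 = o(\log q)$.

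I do not expect a serious obstacle: the entire content is carried by Theorem \ref{ThmPolVino}, whose extra factor $\varphi(q)/q$ over the classical Pólya--Vinogradov bound is precisely what allows the constant to collapse, either to an arbitrarily small $c$ on a positive-density set (via Mertens) or to $e^{-\gamma}/\log\log q$ along a sparse sequence (via Nicolas). The only points needing a little care are confirming that $\mathcal{N}_c$ has positive density (immediate, being the multiples of a fixed integer $P$) and that the moduli considered are large enough for Theorem \ref{ThmPolVino} to apply, which holds automatically since every element of $\mathcal{N}_c$ is a multiple of $P\geq 5$ and Nicolas' inequality is only relevant for large $q$.
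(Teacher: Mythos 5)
Your proposal is correct and follows essentially the same route as the paper: part (1) uses Mertens' theorem \eqref{MertensThm} to pick a primorial $P$ with $\prod_{p\leq y}\left(1-\frac{1}{p}\right)<c$ and takes $\mathcal{N}_{c}$ to be the multiples of $P$, while part (2) feeds Nicolas' inequality \eqref{NicolasThm} directly into Theorem \ref{ThmPolVino}. The paper states this only as a sketch; your write-up simply fills in the same steps explicitly.
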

\begin{proof}
\begin{enumerate}
\item Fix $c>0$. From \eqref{MertensThm} it follows that there exists $x_{c}$ such that $\prod_{p\leq x_{c}}\left(1-\frac{1}{p}\right) < c$. Then it is enough to define $\mathcal{N}_{c}$ to be the set of all the positive integers divisible by $\prod_{p\leq x_{c}}p$. The result follows because of Theorem \ref{ThmPolVino}.
\item It is an easy consequence of \eqref{NicolasThm} and Theorem \ref{ThmPolVino}.
\end{enumerate}
\end{proof}

In particular, we can explicitly write down conditions under which the constants $c$ from \eqref{Equc} are improved. Let $p_{n}$ denote the $n$th prime number and
\begin{align*}
P_{n}:=\prod_{j=1}^{n} p_{j}, \hspace{0.7cm} c_{n}:=\prod_{j=1}^{n}\left(1-\frac{1}{p_{j}}\right).
\end{align*}
Regarding the first part of Corollary \ref{PolVinIneqCor}, one can check using Wolfram Mathematica \cite{Wol} that
\begin{align*}
c_{26} < \frac{3}{8\pi}, \hspace{0.5cm} c_{249}<\frac{3}{4\pi^{2}}, \hspace{0.5cm} c_{187}<\frac{1}{4\pi}, \hspace{0.5cm} c_{6482}<\frac{1}{2\pi^{2}}.
\end{align*}
Therefore, we get that \eqref{PolVinIneqc} holds with the constant $c$ given by:
\begin{align*}
c := \left\{\begin{array}{ll}
c_{26}\approx 0.11912603, & \textrm{if }\ P_{26}\mid q, \\
c_{249}\approx 0.07594335, & \textrm{if }\ P_{249}\mid q, \\
c_{187}\approx 0.07952585, & \textrm{if }\ P_{187}\mid q, \\
c_{6482}\approx 0.05065986, & \textrm{if }\ P_{6482}\mid q,
\end{array}\right.
\end{align*}
and this improves Bordignon and Kerr's results \cite{B,BK} for numbers $q$ that have many small prime divisors.

\section*{Acknowledgements}
I am very grateful to Christian Elsholtz for sharing me with the problem and for his help during the preparation of the paper. I would like to express my gratitude also to Piotr Miska and Bartosz Sobolewski for careful reading of the manuscript and for many suggestions.

\section{Sum of reciprocals of numbers coprime to a given number}

Let us denote for a positive integer $N$ by $\sigma_{1}^{*}(N)$ the sum of square-free divisors of $N$,
\begin{align*}
\sigma_{1}^{*}(N) = \sum_{d\mid N}|\mu (d)|d = \prod_{p\mid N}\left(p+1\right).
\end{align*}

In this section we will prove the following theorem.

\begin{theorem}\label{MAINTHM}
Let $N\geq 2$ be fixed and $x>1$. Then
\begin{align*}
\sum_{\substack{1\leq k\leq x \\ (k,N)=1 }}\frac{1}{k} = \frac{\varphi (N)}{N}\left(\log x + \gamma + \sum_{p\mid N}\frac{\log p}{p-1}\right) + E_{N}(x),
\end{align*}
where $|E_{N}(x)|\leq \frac{\sigma_{1}^{*}(N)}{8x^{2}}$.
\end{theorem}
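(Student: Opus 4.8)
The plan is to strip off the coprimality condition by Möbius inversion and then feed in a sharp Euler--Maclaurin expansion of the ordinary harmonic sum. Writing $\1_{(k,N)=1}=\sum_{d\mid (k,N)}\mu(d)$ and interchanging the order of summation,
\begin{align*}
\sum_{\substack{1\le k\le x\\ (k,N)=1}}\frac{1}{k}=\sum_{d\mid N}\frac{\mu(d)}{d}\sum_{1\le m\le x/d}\frac{1}{m}=\sum_{d\mid N}\frac{\mu(d)}{d}\,H(x/d),
\end{align*}
where $H(y):=\sum_{1\le m\le y}\frac1m$. Everything then rests on a good expansion of $H$. A standard application of Abel summation against $1/t$, followed by one integration by parts, yields the exact identity
\begin{align*}
H(y)=\log y+\gamma+\frac{1}{2y}-\frac{\{y\}}{y}+\frac{P(y)}{y^{2}}-2\int_{y}^{\infty}\frac{P(t)}{t^{3}}\,dt,
\end{align*}
where $P(t):=\tfrac12\{t\}(1-\{t\})$ is the continuous periodic antiderivative of the sawtooth $\tfrac12-\{t\}$; it vanishes at the integers and satisfies $0\le P(t)\le \tfrac18$, which is the source of the constant $1/8$.

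Next I would substitute $y=x/d$, multiply by $\mu(d)/d$, and sum over $d\mid N$, treating the terms separately. The contributions of $\log(x/d)+\gamma$ assemble into the main term: using $\sum_{d\mid N}\frac{\mu(d)}{d}=\frac{\varphi(N)}{N}$ together with the logarithmic-derivative identity $\sum_{d\mid N}\frac{\mu(d)}{d}\log d=-\frac{\varphi(N)}{N}\sum_{p\mid N}\frac{\log p}{p-1}$ (differentiate $\prod_{p\mid N}(1-p^{-s})$ at $s=1$), they combine to exactly $\frac{\varphi(N)}{N}\bigl(\log x+\gamma+\sum_{p\mid N}\frac{\log p}{p-1}\bigr)$. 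The term $\frac{1}{2y}=\frac{d}{2x}$ contributes $\frac{1}{2x}\sum_{d\mid N}\mu(d)=0$, since $N\ge 2$, and so disappears. What is left is
\begin{align*}
E_{N}(x)=-\frac{1}{x}\sum_{d\mid N}\mu(d)\{x/d\}+\frac{1}{x^{2}}\sum_{d\mid N}\mu(d)\,d\,P(x/d)-2\sum_{d\mid N}\frac{\mu(d)}{d}\int_{x/d}^{\infty}\frac{P(t)}{t^{3}}\,dt.
\end{align*}

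The last two sums are already of the right size. Since $0\le P\le\frac18$ and $\int_{x/d}^{\infty}t^{-3}\,dt=\frac{d^{2}}{2x^{2}}$, the third sum is bounded in absolute value by $2\cdot\frac18\cdot\frac{1}{2x^{2}}\sum_{d\mid N}|\mu(d)|\,d=\frac{1}{8x^{2}}\sigma_{1}^{*}(N)$, using $\sum_{d\mid N}|\mu(d)|\,d=\prod_{p\mid N}(p+1)=\sigma_{1}^{*}(N)$; the middle sum is of the same order. The main obstacle is the first sum $\frac1x\sum_{d\mid N}\mu(d)\{x/d\}$, which a priori is only of size $O(1/x)$: unlike the $\frac{1}{2y}$ terms, the first-order boundary terms $\{x/d\}$ do not cancel through $\sum_{d\mid N}\mu(d)=0$. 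The crucial point I would exploit is that $P$ and the fractional parts vanish simultaneously whenever every $x/d$ is an integer --- in particular when $N\mid x$, where $\{x/d\}=0$ and $P(x/d)=0$ for each $d\mid N$, so the first two sums drop out and one is left precisely with the clean third sum, giving $|E_{N}(x)|\le\frac{\sigma_{1}^{*}(N)}{8x^{2}}$ with the sharp constant $1/8$. Controlling the residual fractional-part discrepancy for unrestricted $x$ --- so that the bound, and the optimal constant, survive in full generality --- is the delicate step that I would expect to demand the most care.
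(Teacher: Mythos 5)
Your skeleton coincides with the paper's: Möbius inversion to reduce the coprime sum to $\sum_{d\mid N}\frac{\mu(d)}{d}H(x/d)$, the identities $\sum_{d\mid N}\frac{\mu(d)}{d}=\frac{\varphi(N)}{N}$ and $-\sum_{d\mid N}\frac{\mu(d)\log d}{d}=\frac{\varphi(N)}{N}\sum_{p\mid N}\frac{\log p}{p-1}$, cancellation of the $\frac{1}{2y}$ contributions via $\sum_{d\mid N}\mu(d)=0$, and the bound $\sum_{d\mid N}|\mu(d)|d=\sigma_{1}^{*}(N)$. The genuine difference is the harmonic expansion: the paper quotes Lemma~\ref{HarmonicBound} (Boas--Wrench), which is a statement about \emph{integer} $n$, and then substitutes the real numbers $x/d$ into it; you instead use the exact Euler--Maclaurin identity $H(y)=\log y+\gamma+\frac{1}{2y}-\frac{\{y\}}{y}+\frac{P(y)}{y^{2}}-2\int_{y}^{\infty}P(t)t^{-3}\,dt$, valid for all real $y\geq 1$, which is correct (at integer $y$ it recovers the Boas--Wrench bound $-\frac{1}{8n^{2}}\leq R(n)\leq 0$). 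Your resulting formula for $E_{N}(x)$ is right, and your argument is a complete and sharp proof whenever every $x/d$ is an integer --- in particular for $x=N$, which, as the paper itself notes, is the only case used in Corollary~\ref{MAINCOR} and hence in Theorems~\ref{ThmGaps} and~\ref{ThmPolVino}.

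The ``delicate step'' you defer --- controlling $-\frac{1}{x}\sum_{d\mid N}\mu(d)\{x/d\}$ for unrestricted $x$ --- is not merely delicate: it is impossible, because the theorem as stated for all real $x>1$ is false, and your identity is precisely what exposes this. That term is genuinely of order $1/x$, not $1/x^{2}$: take $N=2$, $x=3$; then $\sum_{k\leq 3,\,(k,2)=1}\frac{1}{k}=\frac{4}{3}\approx 1.3333$, while $\frac{1}{2}(\log 3+\gamma+\log 2)\approx 1.1845$, so $|E_{2}(3)|\approx 0.1488$, exceeding the claimed bound $\frac{\sigma_{1}^{*}(2)}{8\cdot 9}=\frac{3}{72}\approx 0.0417$ (in your decomposition the culprit is $\frac{1}{3}\{3/2\}=\frac{1}{6}$). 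The paper's own proof contains exactly the gap you identified, hidden rather than resolved: the line $\sum_{k\leq x/d}\frac{1}{k}=\log(x/d)+\gamma+\frac{d}{2x}+R(x/d)$ applies the integer-variable expansion at the real points $x/d$, where it fails by the step-function discrepancy $\frac{d}{x}\{x/d\}$. So you have not left a hole in your own argument so much as located an error in the stated theorem; the correct statement is your restricted one (e.g.\ ${\rm rad}(N)\mid x$, which suffices for everything downstream, since the paper only invokes the theorem at $x=N$), or, for general $x$, a version whose error term carries the additional first-order quantity $\frac{1}{x}\sum_{d\mid N}|\mu(d)|\{x/d\}$.
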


In the proofs of Theorems \ref{ThmGaps} and \ref{ThmPolVino} we will actually need explicit bounds for the above sum only in the case of $x=N$. We provide these in the following corollary, which is a consequence of Theorem \ref{MAINTHM}.

\begin{cor}\label{MAINCOR}
If $N\geq 5$ then
\begin{align*}
\frac{\varphi (N)}{N}\log N < \sum_{\substack{1\leq k\leq N \\ (k,N)=1 }}\frac{1}{k} < \frac{\varphi (N)}{N}\big(\log N + \log\log N + \log\log\log N + 4\big).
\end{align*}
\end{cor}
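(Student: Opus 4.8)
The plan is to specialise Theorem \ref{MAINTHM} to $x=N$ and then show that both inequalities are controlled by the size of the secondary main term $\gamma+\sum_{p\mid N}\frac{\log p}{p-1}$ relative to the tiny error $E_{N}(N)$, whose absolute value is at most $\frac{\sigma_{1}^{*}(N)}{8N^{2}}$. Writing the identity of Theorem \ref{MAINTHM} as $\sum_{\substack{k\le N\\(k,N)=1}}\frac{1}{k}=\frac{\varphi(N)}{N}\big(\log N+\gamma+\sum_{p\mid N}\frac{\log p}{p-1}\big)+E_{N}(N)$ and subtracting $\frac{\varphi(N)}{N}\log N$, I reduce the left-hand inequality to $\frac{\varphi(N)}{N}\big(\gamma+\sum_{p\mid N}\frac{\log p}{p-1}\big)+E_{N}(N)>0$ and the right-hand inequality to $\gamma+\sum_{p\mid N}\frac{\log p}{p-1}+\frac{N}{\varphi(N)}E_{N}(N)<\log\log N+\log\log\log N+4$.

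For the lower bound, since $\gamma>0$ and every term of $\sum_{p\mid N}\frac{\log p}{p-1}$ is nonnegative, it suffices to dominate the (possibly negative) error by $\frac{\varphi(N)}{N}\gamma$, i.e.\ to check $\sigma_{1}^{*}(N)<8\gamma\,N\varphi(N)$. This is a very loose elementary inequality, since the right side grows like $N\varphi(N)$ while $\sigma_{1}^{*}(N)=\prod_{p\mid N}(p+1)$ is of size at most a small power of $N$; I would verify it for all $N\ge 5$ directly.

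For the upper bound, which is the substance, I first absorb the error: $\frac{N}{\varphi(N)}|E_{N}(N)|\le\frac{\sigma_{1}^{*}(N)}{8N\varphi(N)}$ is bounded by a small explicit constant for $N\ge 5$ (and tends to $0$). It remains to bound $\sum_{p\mid N}\frac{\log p}{p-1}$. The key reduction is that $x\mapsto\frac{\log x}{x-1}$ is decreasing on $[2,\infty)$ (elementary, since $1-\tfrac{1}{x}-\log x<0$ there), so if $N$ has prime factors $q_{1}<\dots<q_{s}$ then $q_{j}\ge p_{j}$, the $j$-th prime, gives $\sum_{p\mid N}\frac{\log p}{p-1}\le\sum_{j=1}^{s}\frac{\log p_{j}}{p_{j}-1}=\sum_{p\le p_{s}}\frac{\log p}{p-1}$; moreover $\prod_{j\le s}p_{j}\le\rad(N)\le N$ forces $\theta(p_{s})\le\log N$, where $\theta(y)=\sum_{p\le y}\log p$. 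I would then split $\frac{\log p}{p-1}=\frac{\log p}{p}+\frac{\log p}{p(p-1)}$, bound the convergent tail $\sum_{p}\frac{\log p}{p(p-1)}$ by an explicit constant, apply an explicit Mertens estimate $\sum_{p\le y}\frac{\log p}{p}\le\log y$, and use an explicit Chebyshev lower bound $\theta(y)\ge c\,y$ to turn $\theta(p_{s})\le\log N$ into $\log p_{s}\le\log\log N+\log(1/c)$. Together these give $\gamma+\sum_{p\mid N}\frac{\log p}{p-1}\le\log\log N+C$ for an explicit constant $C$.

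Finally I would observe that this reduces the claim to a single parameter: the bound just obtained depends on $N$ only through $s=\omega(N)$ and $\log N$, and since $t\mapsto\log t+\log\log t$ is increasing the worst case for fixed $s$ is the primorial $N=\prod_{j\le s}p_{j}$ (smallest admissible $\log N=\theta(p_{s})$). The asymptotics give $\gamma+\sum_{p\le p_{s}}\frac{\log p}{p-1}=\log\log N+o(1)$, because $\sum_{p\le y}\frac{\log p}{p-1}=\log y-\gamma+o(1)$ and $\theta(p_{s})\sim p_{s}$, so the target $\log\log N+\log\log\log N+4$ has slack tending to infinity. \textbf{The main obstacle} is therefore not the asymptotic estimate but marshalling the explicit prime-counting inequalities (Rosser–Schoenfeld-type bounds on $\theta$ and on $\sum_{p\le y}\frac{\log p}{p}$) so that $C$ together with the error contribution fits under $4+\log\log\log N$ for \emph{every} $N\ge 5$, in particular in the transitional range $5\le N\lesssim e^{e}$ where $\log\log\log N$ is negative and the Chebyshev bounds are weakest; this I would dispatch by a direct finite check.
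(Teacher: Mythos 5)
Your proposal is correct, and its skeleton matches the paper's own proof: specialise Theorem \ref{MAINTHM} at $x=N$, absorb the error $E_N(N)$ by comparing $\sigma_1^*(N)/N^2$ with $\varphi(N)/N$, replace the prime divisors of $N$ by the first $s=\omega(N)$ primes using the monotonicity of $\log x/(x-1)$, split off the convergent sum $\sum_p\frac{\log p}{p(p-1)}$, and apply the explicit Mertens bound $\sum_{p\le y}\frac{\log p}{p}<\log y$ (Rosser--Schoenfeld (3.24)). The genuine difference is how the largest relevant prime $p_s$ is controlled. The paper combines the trivial bound $s\le\log N/\log 2$ with an explicit upper bound for the $k$th prime, $p_k\le 2k(\log k+1)$ (Lemma \ref{PrimeBound}); it is exactly this step that creates the $\log\log\log N$ term in the statement. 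You instead use $\theta(p_s)\le\log\rad(N)\le\log N$ together with an explicit Chebyshev lower bound $\theta(y)\ge cy$, which gives $\log p_s\le\log\log N+\log(1/c)$ with no triple logarithm at all. Your route is in fact sharper: taking $c=\tfrac12\log 2$ (the minimum of $\theta(p)/p$ over primes, attained at $p=2$; for large $p$ use a Rosser--Schoenfeld bound such as $\theta(y)>0.84\,y$ for $y\ge 101$ and check the primes below), the constant you accumulate is roughly $\gamma+\log(2/\log 2)+0.76+\tfrac{3}{16}\approx 2.6$, which is already below $4+\log\log\log N$ at $N=5$. So the ``transitional range'' you worry about requires no large finite verification, and your argument actually proves the stronger estimate $\frac{\varphi(N)}{N}\bigl(\log N+\log\log N+O(1)\bigr)$, of which the stated corollary is a weakening. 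Two small repairs are needed. First, for the lower bound, $\sigma_1^*(N)<8\gamma N\varphi(N)$ cannot literally be ``verified directly for all $N\ge 5$''; the clean argument is the multiplicative computation of \eqref{bound2/3}, namely $\frac{\sigma_1^*(N)}{N\varphi(N)}=\frac{1}{N^2}\prod_{p\mid N}\frac{p(p+1)}{p-1}\le\prod_{p\mid N}\frac{p+1}{p(p-1)}\le\frac32$, which simultaneously justifies your claim that $\frac{N}{\varphi(N)}|E_N(N)|\le\frac{3}{16}$. Second, your parenthetical that $\sigma_1^*(N)$ is ``at most a small power of $N$'' should be dropped ($\sigma_1^*(N)$ exceeds $N$ for squarefree $N$), but nothing in your argument relies on it.
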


In the proofs of Theorem \ref{MAINTHM} and Corollary \ref{MAINCOR} we will need the following lemma.

\begin{lem}\label{HarmonicBound}
For every $n$:
\begin{align*}
\sum_{k=1}^{n}\frac{1}{k}=\log n+\gamma+\frac{1}{2n}+R(n),
\end{align*}
where $-\frac{1}{8n^{2}}\leq R(n)\leq 0$.
\end{lem}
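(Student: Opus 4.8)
The plan is to study the sequence $a_n := \sum_{k=1}^{n}\frac1k - \log n - \frac{1}{2n}$, whose limit is the Euler constant $\gamma$; this is immediate from the usual definition $\gamma = \lim_n\bigl(\sum_{k\le n}\frac1k - \log n\bigr)$ together with $\frac{1}{2n}\to 0$. Since $R(n) = a_n - \gamma$, I would exploit the fact that the defect telescopes:
\begin{align*}
R(n) = a_n - \lim_{m\to\infty} a_m = -\sum_{k=n}^{\infty}(a_{k+1}-a_k) = \sum_{k=n}^{\infty}(a_k - a_{k+1}).
\end{align*}
A direct computation gives $a_k - a_{k+1} = \log\frac{k+1}{k} - \frac12\bigl(\frac1k + \frac{1}{k+1}\bigr) = -T_k$, where
\begin{align*}
T_k := \frac12\left(\frac1k+\frac{1}{k+1}\right) - \int_k^{k+1}\frac{dt}{t}
\end{align*}
is exactly the error of the trapezoidal rule applied to $\int_k^{k+1}\frac{dt}{t}$. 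Hence $R(n) = -\sum_{k=n}^{\infty}T_k$, and the whole statement reduces to proving $0 < \sum_{k\ge n}T_k \le \frac{1}{8n^2}$.

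For this, the key ingredient is the Peano-kernel representation of the trapezoidal error: for $f(t)=1/t$ one has $f''(t)=2/t^3$ and
\begin{align*}
T_k = \frac12\int_k^{k+1}(t-k)(k+1-t)\,f''(t)\,dt = \int_k^{k+1}\frac{(t-k)(k+1-t)}{t^3}\,dt.
\end{align*}
The integrand is positive on $(k,k+1)$, so $T_k>0$, which already yields $R(n)\le 0$ (equivalently, convexity of $1/t$ forces the trapezoid to overestimate the integral). For the lower bound I would use the elementary estimate $(t-k)(k+1-t)\le \frac14$ on $[k,k+1]$, giving
\begin{align*}
T_k \le \frac14\int_k^{k+1}\frac{dt}{t^3} = \frac18\left(\frac1{k^2}-\frac{1}{(k+1)^2}\right),
\end{align*}
so that the tail telescopes to $\sum_{k=n}^{\infty}T_k \le \frac18\sum_{k=n}^{\infty}\bigl(\frac1{k^2}-\frac1{(k+1)^2}\bigr) = \frac{1}{8n^2}$. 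Combining the two bounds gives $-\frac{1}{8n^2}\le R(n)\le 0$ uniformly for all $n\ge 1$.

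The only delicate point is securing the sharp constant $\frac18$ rather than a weaker $O(1/n^2)$ bound. A naive estimate such as $T_k \asymp \frac{1}{6k^3}$ combined with $\sum_{k\ge n}\frac{1}{k^3}\approx \frac{1}{2n^2}$ loses the constant and would force a separate treatment of small $n$; the clean route is the quadratic Peano kernel $(t-k)(k+1-t)$, whose maximum $\frac14$ pairs exactly with $\int\frac{dt}{t^3}$ so that the telescoping sum equals $\frac{1}{8n^2}$ for every $n$. I would establish the Peano identity once (by integration by parts, or by verifying it on $f(t)=t^2$), after which the remaining steps are routine.
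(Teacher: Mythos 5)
Your proof is correct and complete, but it takes a genuinely different route from the paper: the paper does not prove this lemma at all, it simply cites Section 2 of Boas and Wrench (\emph{Partial sums of the harmonic series}, Amer.\ Math.\ Monthly 78 (1971)), where the estimate comes from Euler--Maclaurin-type expansions of the harmonic numbers. Your argument --- writing $R(n)$ as the tail $-\sum_{k\ge n}T_k$ of trapezoidal-rule errors, using the Peano-kernel identity $T_k=\int_k^{k+1}(t-k)(k+1-t)t^{-3}\,dt$ (valid by a two-step integration by parts) to get $T_k>0$, and then bounding the kernel by $\tfrac14$ so that $T_k\le\frac18\bigl(\frac{1}{k^2}-\frac{1}{(k+1)^2}\bigr)$ telescopes exactly to $\frac{1}{8n^2}$ --- is self-contained, elementary, and recovers precisely the constant $\frac18$ demanded by the lemma, uniformly for all $n\ge 1$. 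What the citation buys the paper is brevity and access to sharper information (Boas--Wrench yield the finer behaviour $R(n)\sim-\frac{1}{12n^2}$); what your proof buys is independence from the literature, which is arguably preferable for a lemma this basic. One small caveat: your parenthetical alternative of establishing the Peano identity ``by verifying it on $f(t)=t^2$'' is not by itself a proof of the identity for $f(t)=1/t$ (one would need an additional argument that checking one function suffices); the integration-by-parts derivation you offer first is the one to use.
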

\begin{proof}
See \cite[Section 2]{BW}.
\end{proof}

\begin{proof}[Proof of Theorem \ref{MAINTHM}]
We have by Lemma \ref{HarmonicBound}:
\begin{align*}
\sum_{\substack{1\leq k\leq x \\ (k,N)=1}} \frac{1}{k} = &\ \sum_{k\leq x}\ \sum_{d\mid (k,N)}\frac{\mu(d)}{k} = \sum_{d\mid N}\ \sum_{k\leq \frac{x}{d}}\frac{\mu(d)}{dk}=\sum_{d\mid N}\frac{\mu(d)}{d}\sum_{k\leq \frac{x}{d}}\frac{1}{k} \\
= &\ \sum_{d\mid N}\frac{\mu(d)}{d}\left(\log\left(\frac{x}{d}\right) +\gamma +\frac{d}{2x} +R\left(\frac{x}{d}\right)\right) \\
= &\ \left(\sum_{d\mid N}\frac{\mu (d)}{d}\right)\log x - \sum_{d\mid N}\frac{\mu (d)\log d}{d} + \gamma \sum_{d\mid N}\frac{\mu (d)}{d} +\frac{1}{2x}\sum_{d\mid N}\mu (d) + \sum_{d\mid N}\frac{\mu (d)}{d}R\left(\frac{x}{d}\right) \\
= &\ \frac{\varphi (N)}{N}\log x - \sum_{d\mid N}\frac{\mu (d)\log d}{d} +\gamma\frac{\varphi (N)}{N} + \sum_{d\mid N}\frac{\mu (d)}{d}R\left(\frac{x}{d}\right).
\end{align*}
It is well known (see \cite[2.1.1 Exercise 16(a)]{MV}) that
\begin{align*}
-\sum_{d\mid N}\frac{\mu (d)\log d}{d} = \frac{\varphi (N)}{N}\sum_{p\mid N}\frac{\log p}{p-1}.
\end{align*}
In order to finish the proof, it remains to bound the last sum appearing in the previous expression. By Lemma \ref{HarmonicBound} we have:
\begin{align*}
\left|\sum_{d\mid N}\frac{\mu (d)}{d}R\left(\frac{x}{d}\right)\right| \leq \sum_{d\mid N}\frac{|\mu (d)|}{d}\left|R\left(\frac{x}{d}\right)\right| \leq \frac{1}{8}\sum_{d\mid N}\frac{|\mu (d)|}{d} \cdot \frac{d^{2}}{x^{2}}=\frac{1}{8x^{2}}\sum_{d\mid N}|\mu (d)|d = \frac{\sigma_{1}^{*}(N)}{8x^{2}}.
\end{align*}
The proof is finished.
\end{proof}

For every positive integer $k$ let $p_{k}$ denote the $k$th prime. In order to prove Corollary \ref{MAINCOR} we will need the following bound for $p_{k}$.

\begin{lem}\label{PrimeBound}
For every $k\geq 1$ we have the bound
\begin{align*}
p_{k}\leq 2k(\log k+1).
\end{align*}
\end{lem}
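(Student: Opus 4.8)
The plan is to convert the desired upper bound on $p_{k}$ into a lower bound on the prime counting function $\pi$. Since $\pi(p_{k})=k$ and $\pi$ is nondecreasing, it suffices to prove that $\pi(x)\geq k$ at the point $x=2k(\log k+1)$; this immediately forces $p_{k}\leq x$. The engine for the lower bound on $\pi$ is the classical Chebyshev estimate coming from the central binomial coefficient.

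First I would record the two-sided control on $\binom{2n}{n}$. On one hand, since $\binom{2n}{n}$ is the largest of the $2n+1$ binomial coefficients whose sum is $4^{n}$, we get $\binom{2n}{n}\geq \frac{4^{n}}{2n+1}$. On the other hand, by Legendre's formula the exponent of a prime $p$ in $\binom{2n}{n}$ equals $\sum_{j\geq 1}\left(\lfloor 2n/p^{j}\rfloor-2\lfloor n/p^{j}\rfloor\right)$, each summand is $0$ or $1$, and all summands vanish once $p^{j}>2n$; hence every prime power dividing $\binom{2n}{n}$ is at most $2n$, so $\binom{2n}{n}\leq (2n)^{\pi(2n)}$. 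Taking logarithms and combining the two bounds yields
\begin{align*}
\pi(2n)\geq \frac{n\log 4-\log(2n+1)}{\log(2n)},
\end{align*}
which, after setting $x=2n$, gives a bound of the shape $\pi(x)\geq \frac{x\log 2-\log(x+1)}{\log x}$.

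Next I would substitute $x=2k(\log k+1)$, applying the displayed inequality at $n=\lfloor x/2\rfloor$ and using monotonicity of $\pi$ to absorb the integer-part rounding. After dividing by $k$, the claim $\pi(x)\geq k$ reduces to an inequality of the form
\begin{align*}
2(\log k+1)\log 2\geq \log\big(2k(\log k+1)\big)+o(1),
\end{align*}
where the $o(1)$ collects the $\log(x+1)/k$ term and the rounding corrections. The point is that the coefficient $2\log 2\approx 1.386$ on the left exceeds the coefficient $1$ of $\log k$ on the right, so the left-hand side dominates with a growing margin; this makes the inequality valid for every $k$ beyond an explicit, modest threshold $k_{0}$.

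The main obstacle is purely quantitative: the error term $-\log(x+1)$ and the parity adjustment eat into the constant margin, so the clean Chebyshev inequality only kicks in from $k_{0}$ onward, and I expect $k_{0}$ to be on the order of a few tens. I would therefore finish by verifying the stated bound directly for the finitely many $k<k_{0}$ — a routine check against the list of small primes, where the generous factor $2$ and the additive $+1$ leave a wide margin (for instance $p_{1}=2=2\cdot 1\cdot(\log 1+1)$ already shows the bound is attained at $k=1$). Combining the two ranges completes the proof.
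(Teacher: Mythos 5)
Your proof is correct, but it takes a genuinely different route from the paper's. The paper disposes of the lemma in two lines by quoting the explicit estimates of Rosser and Dusart, which give $p_{k}<k(\log k+\log\log k)<2k\log k$ for $k\geq 6$, and then checking $k\in\{1,\ldots,5\}$ by hand; it is short but rests on serious external results. You instead derive everything from scratch: the Chebyshev-type bound $\pi(2n)\geq\bigl(n\log 4-\log(2n+1)\bigr)/\log(2n)$ coming from $4^{n}/(2n+1)\leq\binom{2n}{n}\leq(2n)^{\pi(2n)}$, followed by the substitution $x=2k(\log k+1)$ and the observation that $\pi(x)\geq k$ forces $p_{k}\leq x$. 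The quantitative details do check out: after dividing by $k$, the inequality you need reads $(2\log 2-1)\log k+\log 2-\log(\log k+1)\geq\bigl(\log(x+1)+2\log 2\bigr)/k$, whose left-hand side has a positive minimum (about $0.36$, attained near $k\approx 5$) and eventually grows like $(2\log 2-1)\log k$, while the right-hand side decays; numerically the inequality holds from roughly $k=15$ onward, so your anticipated threshold of ``a few tens'' and the finite check against the list of small primes (with equality $p_{1}=2$ at $k=1$) are exactly right. One small gloss in your prose: the right-hand side is not just ``coefficient $1$ of $\log k$'' --- the term $\log(2k(\log k+1))$ hides an additional $\log\log k$, so the true margin is $(2\log 2-1)\log k-\log\log k+O(1)$; this still tends to infinity, but it is the reason the margin is thin and the threshold is not tiny. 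In summary, what your route buys is complete self-containedness (no appeal to Rosser--Schoenfeld or Dusart, only the central binomial coefficient); what it costs is a longer finite verification and weaker constants, whereas the paper's citation-based argument yields the stronger intermediate bound $p_{k}<2k\log k$ already for $k\geq 3$.
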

\begin{proof}
If $k\geq 6$ then \cite[Theorems 28 and 30B]{Rosser} and \cite[Section 4]{Dusart} give
\begin{align*}
p_{k}<k(\log k+\log\log k)<2k\log k.
\end{align*}
One can easily check by hand that $p_{k}<2k\log k$ for $k\in\{3,4,5\}$ and $p_{k}\leq 2k(\log k+1)$ for $k\in\{1,2\}$.
\end{proof}

\begin{proof}[Proof of Corollary \ref{MAINCOR}]
At first, let us note that the expression $\frac{\varphi (N)}{N}\sum_{p\mid N}\frac{\log p}{p-1}$ is clearly positive. The lower bound from the statement follows, since if $N\geq 2$, then
\begin{align}\label{bound2/3}
\frac{\varphi(N)/N}{\sigma_{1}^{*}(N)/N^{2}}=\frac{N^{2}\cdot \frac{\varphi (N)}{N}}{\sigma_{1}^{*}(N)} \geq \frac{\left({\rm rad} (N) \right)^{2} \cdot \frac{\varphi(N)}{N}}{\sigma_{1}^{*}(N)} = \prod_{p\mid N} \frac{p^{2}\left(1-\frac{1}{p}\right)}{p+1} \geq \frac{2}{3}\prod_{p\mid N,\ p>2} \frac{p(p-1)}{p+1}\geq \frac{2}{3},
\end{align}
and therefore 
\begin{align*}
\gamma \frac{\varphi (N)}{N} - \frac{\sigma_{1}^{*}(N)}{8N^{2}} \geq \left(\gamma - \frac{3}{16}\right)\frac{\varphi (N)}{N} > \frac{\varphi (N)}{3N} > 0.
\end{align*}

For the upper bound let $h$ be the number of distinct prime divisors of $N$. We have:
\begin{align*}
\sum_{p\mid N}\frac{\log p}{p-1} & \leq \sum_{k=1}^{h}\frac{\log p_{k}}{p_{k}-1} =\sum_{k=1}^{h}\frac{\log p_{k}}{p_{k}} + \sum_{k=1}^{h}\frac{\log p_{k}}{p_{k}(p_{k}-1)} \\ 
& < \sum_{k=1}^{h}\frac{\log p_{k}}{p_{k}} + \sum_{k=1}^{h}\frac{\log p_{k}}{p_{k}(p_{k}-1)} = \sum_{k=1}^{h}\frac{\log p_{k}}{p_{k}} -E-\gamma ,
\end{align*}
where the constant $E$ is approximately computed in \cite[(2.8) and (2.11)]{RS}, $E\in (-1.33259,-1.33258)$. 

In order to estimate the sum $\sum_{k=1}^{h}\frac{\log p_{k}}{p_{k}}$, we directly apply inequality \cite[(3.24)]{RS} and obtain:
\begin{align*}
\sum_{p\mid N}\frac{\log p}{p-1} & < \log p_{h} - E - \gamma.
\end{align*}

From Lemma \ref{PrimeBound} and the trivial bound $h\leq \frac{\log N}{\log 2}$ we get:
\begin{align*}
\log p_{h} & \leq  \log h+\log(\log h +1) + \log 2 \\
& \leq \log\log N - \log\log 2 + \log\left(\log\log N - \log\log 2 +1\right) + \log 2 \\
& = \log\log N + \log\log\log N + \log\left(1 + \frac{1-\log\log 2}{\log\log N}\right) + \log 2 - \log\log 2 \\
& \leq \log\log N + \log\log\log N + \log\left(1 + \frac{1-\log\log 2}{\log\log 5}\right) + \log 2 - \log\log 2.
\end{align*}
Therefore,
\begin{align*}
\sum_{p\mid N}\frac{\log p}{p-1} & \leq \log\log N+\log\log\log N + \log\left(1 + \frac{1-\log\log 2}{\log\log 5}\right) + \log 2 - \log\log 2-E-\gamma \\
& < \log\log N+\log\log\log N + 3.75 -\gamma.
\end{align*}
This leads to the final bound 
\begin{align*}
\sum_{\substack{1\leq k\leq N \\ (k,N)=1}} \frac{1}{k} \leq &\ \frac{\varphi (N)}{N}\big(\log N+\log\log N +\log\log\log N +3.75\big) +\frac{\sigma_{1}^{*}(N)}{8N^{2}} \\
 \leq  &\  \frac{\varphi (N)}{N}\left(\log N+\log\log N +\log\log\log N + \frac{23}{6} \right) \\
 < &\ \frac{\varphi (N)}{N}\big(\log N+\log\log N +\log\log\log N + 4 \big)
\end{align*}
because of \eqref{bound2/3}.
\end{proof}

\section{Large gaps between values of binary quadratic forms}

Recall that for an integral binary quadratic form $Q(x,y)=ax^{2}+bxy+cy^{2}$ we define its discriminant as $D:=b^{2}-4ab$. We say that an integer $n$ is represented by $Q$ is $Q(x,y)=n$ for some integer $x$ and $y$. For a prime number $p$ and positive integers $n$ and $\alpha$ we write $p^{\alpha}\| n$ if $\alpha$ is the highest power of $\alpha$ that divides $n$. The symbol $\left(\frac{m}{n}\right)$ denotes the Kronecker symbol $m$ over $n$.

Recall some auxiliary results from the paper \cite{MS}.

\begin{lem}\label{LemNonRes}
Let $\mathcal{D}$ be a finite nonempty set of integers with the property that $\prod_{D\in\mathcal{A}}D$ is not a perfect square whenever $\mathcal{A}\subseteq\mathcal{D}$ has odd cardinality. Then there exists an integer $r\neq 0$ such that
\begin{align*}
(D,r)=1 \ \ \ \ \ \textrm{ and } \ \ \ \ \ \left(\frac{D}{r}\right) = -1
\end{align*}
for each $D\in\mathcal{D}$.
\end{lem}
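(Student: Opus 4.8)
The plan is to recast the problem in terms of Dirichlet characters and then reduce it to a short duality argument over $\mathbb{F}_{2}$. For each $D\in\mathcal{D}$ let $\chi_{D}$ denote the Kronecker symbol $\left(\frac{D}{\cdot}\right)$, a real character whose modulus divides $4|D|$; all of them can thus be viewed simultaneously as characters modulo $M:=4\prod_{D\in\mathcal{D}}|D|$. Consider the group homomorphism
\[
\Phi\colon (\mathbb{Z}/M\mathbb{Z})^{*}\longrightarrow \{\pm 1\}^{\mathcal{D}},\qquad \Phi(r)=\left(\chi_{D}(r)\right)_{D\in\mathcal{D}}.
\]
Producing the desired $r$ amounts to showing that the constant tuple $(-1,\ldots,-1)$ lies in the image of $\Phi$: if $\Phi(r_{0})=(-1,\ldots,-1)$ for some $r_{0}\in(\mathbb{Z}/M\mathbb{Z})^{*}$, then any positive integer $r\equiv r_{0}\pmod{M}$ satisfies $\left(\frac{D}{r}\right)=-1$ for every $D$, and since a Kronecker symbol equal to $-1$ is in particular nonzero we automatically get $(D,r)=1$; such an $r$ is nonzero.

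To test membership in the image I would pass to $\mathbb{F}_{2}$. Identifying $\{\pm1\}$ with $\mathbb{F}_{2}$ via $-1\mapsto 1$ and $+1\mapsto 0$, each $\chi_{D}$ becomes an $\mathbb{F}_{2}$-linear functional on $(\mathbb{Z}/M\mathbb{Z})^{*}$, and $W:=\operatorname{Im}\Phi$ is an $\mathbb{F}_{2}$-subspace of $\mathbb{F}_{2}^{\mathcal{D}}$. Every linear functional on $\mathbb{F}_{2}^{\mathcal{D}}$ has the form $v\mapsto\sum_{D\in\mathcal{A}}v_{D}$ for a unique $\mathcal{A}\subseteq\mathcal{D}$, and such a functional annihilates $W$ precisely when $\prod_{D\in\mathcal{A}}\chi_{D}$ is the principal character. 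By the self-duality $W=(W^{\perp})^{\perp}$ over this finite-dimensional space, the target $\mathbf{1}=(1,\ldots,1)$ lies in $W$ if and only if every subset $\mathcal{A}$ for which $\prod_{D\in\mathcal{A}}\chi_{D}$ is principal has even cardinality.

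It remains to translate the condition ``$\prod_{D\in\mathcal{A}}\chi_{D}$ principal'' into the hypothesis. By multiplicativity of the Kronecker symbol in its upper argument, $\prod_{D\in\mathcal{A}}\chi_{D}=\left(\frac{m}{\cdot}\right)$ with $m=\prod_{D\in\mathcal{A}}D$, and $\left(\frac{m}{\cdot}\right)$ is principal if and only if $m$ is a perfect square. Hence the criterion of the previous paragraph reads: whenever $\prod_{D\in\mathcal{A}}D$ is a perfect square, $|\mathcal{A}|$ is even; equivalently, for every subset $\mathcal{A}$ of odd cardinality the product $\prod_{D\in\mathcal{A}}D$ is not a perfect square. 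This is exactly the hypothesis of the lemma, so $\mathbf{1}\in W$ and the required $r$ exists.

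The main obstacle is the clean fact that $\left(\frac{m}{\cdot}\right)$ is principal exactly when $m$ is a perfect square. Establishing it requires some care with the prime $2$, with the sign of $m$ (a negative $m$ is never a square, consistently with $\left(\frac{-1}{\cdot}\right)$ being nonprincipal), and with the fact that the Kronecker symbol is genuinely multiplicative in its upper argument and defines a character modulo $4|m|$; once these standard properties are available the argument is purely formal. As an alternative to the self-duality step, one could instead construct $r_{0}$ by the Chinese Remainder Theorem, prescribing suitable local values at the primes dividing $M$, and could even arrange $r$ to be prime by combining the above with Dirichlet's theorem on primes in arithmetic progressions.
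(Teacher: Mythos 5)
Your proposal is correct, but there is nothing in the paper to compare it against line by line: the paper's ``proof'' of Lemma \ref{LemNonRes} is just the citation to Mallet-Paret and Sell \cite[Lemma A.1]{MS}, so you have supplied an actual argument where the paper outsources one. Your route --- viewing each $\left(\frac{D}{\cdot}\right)$ as a real character modulo $M=4\prod_{D\in\mathcal{D}}|D|$, identifying the image $W$ of $\Phi$ with an $\mathbb{F}_{2}$-subspace of $\mathbb{F}_{2}^{\mathcal{D}}$, and testing $\mathbf{1}\in W$ via the double-annihilator identity $W=(W^{\perp})^{\perp}$ --- is a clean duality argument, and the translation of the hypothesis (``odd-cardinality products are non-squares'') into the annihilator condition is exactly right. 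Be aware, however, that the entire arithmetic content of the lemma is concentrated in the fact you black-box: that $\left(\frac{m}{\cdot}\right)$ is nonprincipal whenever $m$ is not a perfect square (the converse direction, square implies principal, is trivial and in fact not needed). This fact is classical and provable by quadratic reciprocity for Jacobi symbols plus the Chinese Remainder Theorem (no Dirichlet needed), but it must be cited or proved, since without it the argument is purely formal and proves nothing. Two smaller points you should make explicit: (i) you need nonprincipality at level $M$, not merely at level $4|m|$; this follows from the surjectivity of the reduction $(\mathbb{Z}/M\mathbb{Z})^{*}\to(\mathbb{Z}/4|m|\mathbb{Z})^{*}$, which is the same CRT fact that justifies viewing all the $\chi_{D}$ simultaneously as characters modulo $M$ without losing information; (ii) passing from the residue class $r_{0}$ to an actual positive integer $r$ uses the periodicity of $\left(\frac{D}{n}\right)$ in $n>0$ with period dividing $M$, which is worth one sentence. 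Compared with constructions that work directly with squarefree kernels, prime factorizations and reciprocity (which is how such lemmas are typically proved in the source literature), your version buys brevity and conceptual clarity at the cost of invoking the character-theoretic criterion as an external ingredient.
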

\begin{proof}
See \cite[Lemma A.1]{MS}.
\end{proof}

\begin{lem}\label{LemPrime}
Let $Q$ be a quadratic form with discriminant $D$. If $p$ is a prime and $\alpha$ is an odd number such that $\left(\frac{D}{p}\right)=-1$ and $p^{\alpha}\| n$, then $n$ is not represented by $Q$.
\end{lem}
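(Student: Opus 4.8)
The plan is to assume a representation $n = Q(x,y)$ with $Q(x,y)=ax^{2}+bxy+cy^{2}$ and derive a contradiction by showing that the exact power of $p$ dividing $n$ is forced to be \emph{even}, which is incompatible with $p^{\alpha}\| n$ for odd $\alpha$. Write $v_{p}(m)$ for the exponent of $p$ in $m$. The engine of the argument is the classical completing-the-square identity
\begin{align*}
4a\,Q(x,y) = (2ax+by)^{2} - D\,y^{2},
\end{align*}
whose symmetric partner $4c\,Q(x,y) = (2cy+bx)^{2} - D x^{2}$ is obtained by exchanging the roles of $a,c$ and of $x,y$. These identities turn divisibility of $Q(x,y)$ by $p$ into the assertion that $D$ is a square modulo $p$, which is exactly what the hypothesis $\left(\frac{D}{p}\right)=-1$ forbids.

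First I would reduce to \emph{primitive} representations by a descent on the power of $p$. Setting $\delta=\min(v_{p}(x),v_{p}(y))$ and writing $x=p^{\delta}x_{0}$, $y=p^{\delta}y_{0}$ with $p\nmid\gcd(x_{0},y_{0})$, one gets $n=p^{2\delta}Q(x_{0},y_{0})$. Hence it suffices to prove the core claim: if $\left(\frac{D}{p}\right)=-1$ and $p\nmid\gcd(x_{0},y_{0})$, then $p\nmid Q(x_{0},y_{0})$. Granting this, $v_{p}(n)=2\delta$ is even, giving the contradiction.

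For the core claim with $p$ odd, note that $\left(\frac{D}{p}\right)=-1$ forces $p\nmid D$, and that $a$ and $c$ cannot both be divisible by $p$ (otherwise $D\equiv b^{2}\pmod p$ would be either $0$ or a nonzero square modulo $p$, in both cases contradicting $\left(\frac{D}{p}\right)=-1$). Since replacing $(x,y)$ by $(y,x)$ swaps $a$ and $c$ without changing $n$ or destroying primitivity, I may assume $p\nmid a$. Suppose $p\mid Q(x_{0},y_{0})$. Reducing the first identity modulo $p$ yields $(2ax_{0}+by_{0})^{2}\equiv D\,y_{0}^{2}\pmod p$. If $p\mid y_{0}$, then $p\mid 2ax_{0}$, and since $p$ is odd with $p\nmid a$ this forces $p\mid x_{0}$, contradicting primitivity; hence $p\nmid y_{0}$, and dividing by $y_{0}^{2}$ exhibits $D$ as a nonzero square modulo $p$, again contradicting $\left(\frac{D}{p}\right)=-1$.

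The step I expect to be the main nuisance is the prime $p=2$, where the factor $4$ in the identity is useless modulo $2$ and the Kronecker symbol has a different meaning. Since $D\equiv 0,1\pmod 4$, the condition $\left(\frac{D}{2}\right)=-1$ amounts to $D\equiv 5\pmod 8$; here I would argue directly. Then $D\equiv 1\pmod 4$ forces $b$ odd, so $b^{2}\equiv 1\pmod 8$, and from $b^{2}-4ac\equiv 5\pmod 8$ one gets $ac$ odd, hence $a$ and $c$ odd as well. Thus $Q(x_{0},y_{0})\equiv x_{0}^{2}+x_{0}y_{0}+y_{0}^{2}\pmod 2$, which is odd unless $x_{0}$ and $y_{0}$ are both even; primitivity rules this out, so $2\nmid Q(x_{0},y_{0})$ and the same descent closes the argument. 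Since the lemma is quoted from \cite{MS}, one may alternatively simply cite \cite[Appendix A]{MS}.
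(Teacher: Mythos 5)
Your proof is correct. Note that for this lemma the paper records no argument at all: it simply cites \cite[Lemma A.2]{MS} and \cite[Lemma 7]{DEKKM}, exactly as your closing sentence anticipates. So your contribution is a self-contained reconstruction, and it is essentially the standard argument underlying those references: the completing-the-square identity $4a\,Q(x,y)=(2ax+by)^{2}-Dy^{2}$, descent to a primitive representation $(x_{0},y_{0})$ so that $v_{p}(n)=2\delta$ is forced to be even, and a separate treatment of $p=2$. All the steps check out: ruling out $p\mid a$ and $p\mid c$ simultaneously (since otherwise $D\equiv b^{2}\pmod p$ makes the symbol $0$ or $+1$), the harmless swap $(x,y)\mapsto(y,x)$ to assume $p\nmid a$, the primitivity contradiction when $p\mid y_{0}$, and at $p=2$ the observation that $D\equiv 5\pmod 8$ forces $a,b,c$ all odd, whence $Q(x_{0},y_{0})\equiv x_{0}^{2}+x_{0}y_{0}+y_{0}^{2}\equiv 1\pmod 2$ for primitive $(x_{0},y_{0})$. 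Two small points worth making explicit: first, the paper's convention defines $p^{\alpha}\| n$ only for positive $n$ and $\alpha$, so $n\neq 0$, hence $(x,y)\neq(0,0)$ and your $\delta=\min(v_{p}(x),v_{p}(y))$ is finite, which is what makes the descent legitimate; second, in the odd-$p$ case you should note (as you implicitly do) that $p\nmid D$ follows from $\left(\frac{D}{p}\right)=-1$ \emph{before} concluding that the congruence $D\equiv(\text{unit})^{2}\pmod p$ yields symbol $+1$ rather than $0$. With those remarks your argument is complete and is a perfectly good substitute for the citation.
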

\begin{proof}
See \cite[Lemma A.2]{MS} or \cite[Lemma 7]{DEKKM}.
\end{proof}

\begin{proof}[Proof of Theorem \ref{ThmGaps}] We follow Richards' original idea from \cite{Richards} with the improvements from \cite{MS} and \cite{DEKKM}. Let
\begin{align*}
d:=\lcm\big\{\ |D|\ \big|\ D\in\mathcal{D}\ \big\}
\end{align*}
and $r$ be the the number from Lemma \ref{LemNonRes}. We can assume that $1\leq r\leq d$.

Let $t=\varphi (d)$ and for $j\in\{1,\ldots ,t\}$ define $\ell_{j}$ to be the $j$th residue class (in increasing order) modulo $d$ and coprime to $d$. In particular, $\ell_{1}=1$ and $\ell_{t}=d-1$. Let
\begin{align*}
T_{i}:=\big\{\ x\in (\mathbb{Z}/d\mathbb{Z})^{*}\ |\ l_{j}x\equiv r\ \Mod{d} \textrm{ for some } j\leq i \ \big\}.
\end{align*}
Let $\varpi$ be the projection, $\varpi :\mathbb{Z}\to\mathbb{Z}/d\mathbb{Z}$.

Let us fix $\varepsilon >0$ and take a positive integer $k$ (the size of the gaps) sufficiently large in terms of $\varepsilon$ and $d$. Define
\begin{align*}
L:=\max\big\{\ |r+dj| \ \big|\ j=0,\ldots ,k \ \big\}=r+dk\leq d(k+1).
\end{align*}
For every $p\leq L$, $p\nmid d$, let $\beta_{p}:=\left\lfloor\frac{\log L}{\log p}\right\rfloor$. That is, $p^{\beta_{p}}\leq L<p^{\beta_{p}+1}$. 

Now define
\begin{align*}
P:=\prod_{\substack{ p_{t}\leq L/\ell_{t} \\ (p_{t},d)=1 }}p_{t}^{\beta_{p_{t}}+1} \ \prod_{i=1}^{t-1} \ \prod_{\substack{ L/\ell_{i+1} < p_{i}\leq L/\ell_{i} \\ \varpi (p_{i})\in T_{i} }}p_{i}^{\beta_{p_{i}}+1} ,
\end{align*}
where the products are taken over prime numbers $p_{1},\ldots ,p_{t}$.

Finally, let $y\in\{1,\ldots ,P\}$ be such that 
\begin{align*}
dy\equiv r\pmod{P} .
\end{align*}
One can follow the argument from the end of the proof of \cite[Theorem 2]{DEKKM} to show that all the numbers $y+1$, $y+2$, \ldots , $y+k$ are not represented by forms of discriminant from the set $\mathcal{D}$. This would lead to bounds similar to \eqref{DEKKM1} and \eqref{DEKKM2}. Instead of that, we can use the idea from \cite{DEKKM,KK} in order to gain an additional factor $2$. Let $\delta\in (0,1/l_{t})$ and
\begin{align}\label{EqPdelta}
P_{\delta}:=\prod_{\substack{p\mid P \\ p\leq\delta L}}p^{\beta_{p}+1}\ \prod_{\substack{p\mid P \\ p>\delta L}}p.
\end{align}
Now, define $y_{0}\in\{1,\ldots ,P_{\delta}\}$ to be the number satisfying $y_{0}\equiv y\pmod{P_{\delta}}$.

For every $1\leq n\leq \delta L$ consider the interval $I_{n}:=[1+y_{0}+nP_{\delta},k+y_{0}+nP_{\delta}]$.

\begin{lem}
For some $1\leq n \leq \delta L$ we have $\mathcal{S}\cap I_{n}=\emptyset$.
\end{lem}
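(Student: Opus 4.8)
\emph{Proof plan.} The plan is to reduce non-representability to the existence of suitable blocking primes, and then to show by an averaging argument over $n$ that one of the intervals $I_n$ avoids all of them at once. Write $v_a:=r+da$ for $a\in\{1,\ldots,k\}$, so $0<v_a\leq L$ and $v_a\equiv r\Mod{d}$; in particular $(v_a,d)=1$. For each $D\in\mathcal{D}$ the Kronecker symbol $\left(\frac{D}{\cdot}\right)$ is a character modulo $|D|$, and $|D|\mid d$ gives $v_a\equiv r\Mod{|D|}$, so $\left(\frac{D}{v_a}\right)=\left(\frac{D}{r}\right)=-1$ by Lemma \ref{LemNonRes}. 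Writing $v_p$ for the $p$-adic valuation and factoring $\left(\frac{D}{v_a}\right)=\prod_{p\mid v_a}\left(\frac{D}{p}\right)^{v_p(v_a)}=-1$, there is a prime $p\mid v_a$ with $\left(\frac{D}{p}\right)=-1$ and $v_p(v_a)$ odd; I shall call such a $p$ a blocking prime for the pair $(a,D)$. By Lemma \ref{LemPrime} it suffices to find one $n$ for which, for every $a\in\{1,\ldots,k\}$, every blocking prime of every pair $(a,D)$ divides $m:=y_0+a+nP_\delta$ to an odd power; then each such $m$ is represented by no form of any discriminant in $\mathcal{D}$, giving $\mathcal{S}\cap I_n=\emptyset$.

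First I would pin down the valuations of $m$. From $y_0\equiv y\Mod{P_\delta}$, $dy\equiv r\Mod{P}$ and $P_\delta\mid P$ one obtains $dm\equiv r+da=v_a\Mod{P_\delta}$. The structurally crucial step is then to prove that \emph{every} prime factor $p$ of $v_a$ divides $P$ (hence $P_\delta$). If $p\leq L/\ell_t$ this is immediate from the first product in the definition of $P$. If $p>L/\ell_t$, write $v_a=pw$; then $(w,d)=1$, so $w\equiv\ell_j\Mod{d}$ for an index $j$ with $\ell_j\leq w\leq L/p$, whence $j\leq i:=\max\{\,j'\,:\,\ell_{j'}\leq L/p\,\}\leq t-1$. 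Since $v_a\equiv r\Mod{d}$ we get $\ell_j\,\varpi(p)\equiv r\Mod{d}$, so $\varpi(p)\in T_i$, while $L/\ell_{i+1}<p\leq L/\ell_i$; thus $p$ occurs in the $i$th product and $p\mid P$. Consequently, for $p\mid v_a$ with $p\leq\delta L$ (these appear in $P_\delta$ to the full power $p^{\beta_p+1}$), the congruence $dm\equiv v_a\Mod{p^{\beta_p+1}}$ together with $v_p(v_a)\leq\beta_p$ forces $v_p(m)=v_p(v_a)$; for $p\mid v_a$ with $p>\delta L$ (appearing in $P_\delta$ only to the first power) I obtain merely $p\mid m$, with no control on higher powers.

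It remains to choose $n$ so that every large blocking prime $p>\delta L$ still divides $m$ to an odd power. For $k$ large such a prime satisfies $p>\delta L>\sqrt{L}$, so $v_p(v_a)=1$, and it is enough to ensure $p\,\|\,m$, i.e.\ to avoid $p^2\mid m$. For fixed $a$ and such $p$, as $n$ ranges over $\{1,\ldots,\lfloor\delta L\rfloor\}$ the condition $p^2\mid m$ defines a single residue class modulo $p$, and since this range has length $<p$ it is met by at most one $n$. Moreover, for $k$ large each $v_a\leq L$ has at most one prime factor exceeding $\delta L$ (two would multiply to more than $(\delta L)^2>L$). Hence the number of bad values of $n$ is at most $\sum_{a=1}^{k}1=k$. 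I would then take $\delta$ in the nonempty range $1/d<\delta<1/\ell_t$, so that $\delta L=\delta(r+dk)>k$; this makes the number of admissible $n$ strictly exceed the number of bad ones, yielding a good $n$. For that $n$ each blocking prime divides the corresponding $m$ to an odd power --- small ones because $v_p(m)=v_p(v_a)$, large ones by the choice of $n$ --- so every $m=y_0+a+nP_\delta$ with $1\leq a\leq k$ lies outside $\mathcal{S}$, and $\mathcal{S}\cap I_n=\emptyset$.

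The main obstacle is the middle step: showing that every prime factor of $v_a$ divides $P$. This is precisely what the intricate definition of $P$ (through the sets $T_i$ and the ranges $L/\ell_{i+1}<p\leq L/\ell_i$) is built to guarantee, and it has to be matched carefully against the inequalities $\ell_j\leq w\leq L/p$. The second delicate point is the final count: one must check both that each large prime spoils at most one value of $n$ and that $\delta$ can be taken with $\delta d>1$ while staying below $1/\ell_t$. The latter is exactly what produces the gain, since for $p>\sqrt{L}$ one has $\beta_p=1$, so passing from $P$ to $P_\delta$ replaces the exponent $\beta_p+1=2$ by $1$ on all large primes --- the factor close to $2$ in the final estimate.
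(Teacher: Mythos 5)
Your proposal gets the structural ingredients right, and they match the paper's: reducing non-representability to blocking primes of $v_a=r+da$, using the definition of $P$ (through the sets $T_i$ and the ranges $L/\ell_{i+1}<p\leq L/\ell_i$) to show that every prime factor of $v_a$ divides $P$, transferring exact valuations for primes $p\leq \delta L$ via $dm\equiv v_a \pmod{P_\delta}$, and noting that each pair $(a,p)$ with $p>\delta L$ spoils at most one value of $n$. The gap is in your final count, and it is not cosmetic. You bound the number of bad $n$ by $k$ (at most one per $a$), and to have a good $n$ left over you must force $\lfloor \delta L\rfloor \approx \delta d k$ to exceed $k$, i.e.\ $\delta>1/d$; hence your proof only covers $\delta$ in the window $(1/d,\,1/(d-1))$. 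But the lemma is asserted for an arbitrary fixed $\delta\in(0,1/l_t)$, and the deduction of Theorem \ref{ThmGaps} from the lemma crucially takes $\delta$ small compared to $\varepsilon$, so that the contribution $2(1+\varepsilon)\delta L$ of the small primes to $\log P_\delta$ is negligible. Under your constraint $\delta d>1$ that term is at least about $k$, and running the rest of the argument would yield a denominator of roughly $\log d+\log\log d+\log\log\log d+5$ instead of $4$ --- a strictly weaker theorem. So, as written, your argument proves the lemma only on a sub-range of $\delta$ that is useless downstream.

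The idea you are missing is the paper's per-prime (rather than per-$a$) count. Every bad $n$ produces a pair $(j,p)$ with $p>\delta L$, $p\mid P_\delta$ and $p\mid j+y_0$; for a fixed such prime there are at most $L/p+1<2/\delta$ admissible values of $j$, and the number $F$ of primes $p>\delta L$ dividing $P_\delta$ satisfies $(\delta L)^F\leq P_\delta<L^{2\pi(L)}$, whence $F<2(1+\varepsilon)L/\log(\delta L)$. Therefore the number of bad $n$ is at most $\tfrac{2}{\delta}F=O_\delta\!\left(L/\log L\right)$, which is smaller than $\delta L$ once $L$ is large, \emph{for every} fixed $\delta>0$. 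This $L/\log L$ saving --- coming from the fact that $P_\delta$ cannot have many prime divisors exceeding $\delta L$ --- is what allows $\delta\to 0$; your per-$a$ bound of $k$ can never beat $\delta L$ when $\delta\leq 1/d$, so no rearrangement of your count alone can close the gap.
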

\begin{proof}
Suppose that $m\in\mathcal{S}\cap I_{n}$ for some $n$. Then there exists $j\in\{1,\ldots,k\}$ such that $m=j+y_{0}+nP_{\delta}$. Hence,
\begin{align*}
dm\equiv d(j+y_{0})\equiv dj+dy\equiv dj+r \pmod{P_{\delta}}.
\end{align*}
However,
\begin{align*}
-1=\left(\frac{D}{r}\right)=\left(\frac{D}{dj+r}\right)
\end{align*}
for every $D\in\mathcal{D}$. Hence, there exists a prime $p$ and an odd number $\gamma$ such that $\left(\frac{D}{p}\right)=-1$ and $p^{\gamma}\| dj+r$. Then $\gamma\leq\beta_{p}$, or otherwise $p^{\gamma}>L\geq dj+r$. Write $dj+r=p^{\gamma}l$. Hence, if $\gamma\geq 3$ then $p^{3}\leq L$, so both $p\leq L/l_{t}$ and $p\leq\delta L$ are true if $L$ is big enough. Therefore, we get $p^{\beta_{p}+1}\mid P_{\delta}$. Hence, $p^{\gamma}\| m$, so $m\not\in\mathcal{S}$ by Lemma \ref{LemPrime}, a contradiction.

It follows from the previous paragraph that $\gamma =1$. We can also assume that $p>\delta L$ (otherwise $p\| m$, a contradiction by Lemma \ref{LemPrime}). Then $p\mid m$, and since $m\in\mathcal{S}$ and $\left(\frac{D}{p}\right)=-1$, we have $p^{2}\mid m$. Moreover, there exists $i\in\{1,\ldots, t\}$ such that $L/l_{i+1}<p\leq L/l_{i}$, so $l\leq l_{i}$ and hence $\varpi (p)\in T_{i}$, so $p\mid P_{\delta}$.

Now observe that for every pair $(j,p)$, where $j\leq k$ and $p>\delta L$, there is at most one $n<\delta L$ such that $p^{2}\mid j+y_{0}+nP_{\delta}$. Indeed, if it is true for numbers $n_{1}$ and $n_{2}$, then $p^{2}\mid (n_{1}-n_{2})P_{\delta}$, so $p\mid n_{1}-n_{2}$ because $p\| P_{\delta}$. However, $|n_{1}-n_{2}|\leq \delta L<p$, so $n_{1}=n_{2}$.

Moreover, if $p\mid j_{1}+y_{0}$ and $p\mid j_{2}+y_{0}$, then $p\mid j_{1}-j_{2}$. Hence, there are at most $\frac{L}{p}+1<\frac{1}{\delta}<\frac{2}{\delta}$ numbers $j$ such that $p\mid j+y_{0}$.

Let us summarize. So far we have proved that if $\mathcal{S}\cap I_{n}\neq \emptyset$ then there exist $j\leq L$ and a prime $p>\delta L$ such that $p\mid P_{\delta}$ and $p^{2}\mid j+y_{0}+nP_{\delta}$, and for every such a $p$ there are at most $\frac{2}{\delta}$ choices of $j$. Therefore,
\begin{align*}
\#\{n\leq\delta L\ |\ \mathcal{S}\cap I_{n}\neq\emptyset\} &\leq \#\big\{ (j,p)\ \big|\ j\leq L,\ p>\delta L,\ p\mid P_{\delta},\ p\mid j+y_{0}\big\}<\frac{2}{\delta}\#\{ p>\delta L\ |\ p\textrm{ divides } P_{\delta}\ \}.
\end{align*}
Define $F:=\#\{ p>\delta L\ |\ p\textrm{ divides } P_{\delta}\ \}$. Then 
\begin{align*}
(\delta L)^{F}\leq P_{\delta}<L^{2\pi (L)}<L^{2(1+\varepsilon )\frac{L}{\log L}}
\end{align*}
if $L$ is large enough, where $\pi (L)$ denotes the number of primes not greater than $L$. Therefore,
\begin{align*}
\#\{n\leq \delta L\ |\ \mathcal{S}\cap I_{n}\neq\emptyset\}<\frac{4(1+\varepsilon )}{\delta}\cdot\frac{L}{\log (\delta L)}.
\end{align*}
If $L$ is large enough, the last quantity is smaller than $\delta L$. The lemma is proved.
\end{proof}

\vspace{\baselineskip}

From the above lemma we get that there exists $n$ such that $s_{n}<P_{\delta}$ and
\begin{align*}
s_{n+1}-s_{n}\geq k.
\end{align*}
Hence, in order to finish the proof it is enough to find a bound of the form $k\geq C_{d}\log P_{\delta}$ for appropriate $C_{d}$. 

Let $\varepsilon>0$. The product over primes $p\leq \delta L$ can be bounded using the prime number theorem:
\begin{align}\label{IneqProdpbeta+1}
\prod_{p\leq\delta L} p^{\beta_{p}+1}<L^{2\pi (\delta L)}<L^{2(1+\varepsilon )\frac{\delta L}{\log L}} = e^{2(1+\varepsilon )\delta L}
\end{align}
if $L$ is large enough. 

The part containing primes $>\delta L$ is more difficult to estimate. Recall that $t=\varphi (d)$ and for every $i$ the set $T_{i}$ consists of exactly $i$ classes modulo $d$. Put 
\begin{align}\label{Eqlt+1}
l_{t+1}:=1/\delta
\end{align}
and for every $X>0$ and integers $A$, $B\in\{0,\ldots ,B-1\}$ let
\begin{align*}
\pi (X;A,B) := \#\left\{\ p\leq X\ |\ p\equiv A \pmod{B}\ \right\}.
\end{align*}

We get from the prime number theorem for arithmetic progressions and the prime number theorem that for every $i\in\{1,\ldots ,t\}$, since $\# T_{i}= i$, the following is true if $L$ is big enough:
\begin{align*}
\#\left\{\ \frac{L}{l_{i+1}}<p\leq\frac{L}{l_{i}}\ \bigg|\ \varpi (p)\in T_{i} \ \right\} & = \sum_{j\in T_{i}} \left(\pi\left(\frac{L}{l_{i}};j,d\right) - \pi\left(\frac{L}{l_{i+1}};j,d\right) \right) < \sum_{j\in T_{i}}\frac{1+\varepsilon}{t}\left(\pi\left(\frac{L}{l_{i}}\right)-\pi\left(\frac{L}{l_{i+1}}\right)\right) \\
&= (1+\varepsilon) \frac{\# T_{i}}{t}\left(\pi\left(\frac{L}{l_{i}}\right)-\pi\left(\frac{L}{l_{i+1}}\right)\right) = (1+\varepsilon)\frac{i}{t}\left(\pi\left(\frac{L}{l_{i}}\right)-\pi\left(\frac{L}{l_{i+1}}\right)\right) \\
&< (1+2\varepsilon)\frac{i}{t}\left(\frac{L}{l_{i}\log\left(\frac{L}{l_{i}}\right)}-\frac{L}{l_{i+1}\log\left(\frac{L}{l_{i+1}}\right)}\right).
\end{align*}
Observe further, that for every $i$, if $L$ is big enough:
\begin{align*}
\frac{1}{l_{i}\log\left(\frac{L}{l_{i}}\right)}-\frac{1}{l_{i+1}\log\left(\frac{L}{l_{i+1}}\right)} & = \frac{1}{l_{i}\log L}\cdot \frac{1}{1-\frac{\log l_{i}}{\log L}} - \frac{1}{l_{i+1}\log L}\cdot \frac{1}{1-\frac{\log l_{i+1}}{\log L}} \\
& < \frac{1+3\varepsilon}{1+2\varepsilon}\left(\frac{1}{l_{i}\log L} - \frac{1}{l_{i+1}\log L}\right) \\
& = \frac{1+3\varepsilon}{1+2\varepsilon} \cdot \frac{1}{\log L}\left(\frac{1}{l_{i}}-\frac{1}{l_{i+1}}\right).
\end{align*}
Hence,
\begin{align*}
\#\left\{\ \frac{L}{l_{i+1}}<p\leq\frac{L}{l_{i}}\ \bigg|\ \varpi (p)\in T_{i} \ \right\} < (1+3\varepsilon)\frac{i}{t}\frac{L}{\log L}\left(\frac{1}{l_{i}}-\frac{1}{l_{i+1}}\right).
\end{align*}
Therefore, $\prod\limits_{\substack{p>\delta L \\ p\mid P_{\delta}}}p<L^{\alpha}=e^{\alpha\log L}$, where
\begin{align*}
\alpha &\leq \sum_{i=1}^{t} \#\left\{\ \frac{L}{l_{i+1}}<p\leq\frac{L}{l_{i}}\ \bigg|\ \varpi (p)\in T_{i} \ \right\} < (1+3\varepsilon )\frac{L}{t\log L} \sum_{i=1}^{t}\left(\frac{i}{l_{i}}-\frac{i}{l_{i+1}}\right) \\
&= (1+3\varepsilon)\frac{L}{t\log L}\left(\sum_{i=1}^{t}\frac{1}{l_{i}}-\frac{t}{l_{t+1}}\right) = (1+3\varepsilon)\frac{L}{t\log L}\left(\sum_{i=1}^{t}\frac{1}{l_{i}}-\delta t\right).
\end{align*}
In the last equality, we used equality \eqref{Eqlt+1}.

Using the above inequality together with \eqref{EqPdelta} and \eqref{IneqProdpbeta+1}, we obtain
\begin{align*}
\log P_{\delta} = \log \left(\prod_{\substack{p\mid P \\ p\leq\delta L}}p^{\beta_{p}+1}\right) + \log \left(\prod_{\substack{p\mid P \\ p>\delta L}}p \right) < 2(1+\varepsilon )\delta L + \alpha \log L.
\end{align*}
Further, Corollary \ref{MAINCOR} and inequality $L\leq d(k+1)$ give
\begin{align*}
\log P_{\delta} &< 2(1+\varepsilon )\delta L + (1+3\varepsilon )\frac{L}{t}\sum_{i=1}^{t}\frac{1}{l_{i}}-(1+3\varepsilon )\delta L \\
&= (1+3\varepsilon )\frac{L}{t}\sum_{i=1}^{t}\frac{1}{l_{i}} +(1-\varepsilon)\delta L \\
&< (1+3\varepsilon )\frac{d(k+1)}{\varphi (d)} \sum_{i=1}^{t}\frac{1}{l_{i}} + \delta d (k+1) \\
&< \left((1+3\varepsilon )\frac{d}{\varphi (d)}\frac{\varphi (d)}{d} (\log d+\log\log d+\log\log\log d+ 4) +\delta d\right)(k+1) \\
&< (1+4\varepsilon )(\log d+\log\log d+\log\log\log d+ 4)k,
\end{align*}
where the last inequality is true because we can assume that $\delta$ is small compared to $\varepsilon$, and $k$ is big enough.

From the above consideration we get that for every $\varepsilon_{1} >0$ there are infinitely many numbers $n$ such that
\begin{align*}
\frac{s_{n+1}-s_{n}}{\log s_{n}}>\frac{1-\varepsilon_{1}}{\log d + \log\log d + \log\log\log d + 4}.
\end{align*}
Since $\varepsilon_{1} $ can be arbitrarily small, we get the result after taking the $\limsup$ on both sides of the inequality.
\end{proof}

\section{Pólya-Vinogradov inequality}

\begin{proof}[Proof of Theorem \ref{ThmPolVino}]
We focus on the case of $\chi$ primitive. The general case can be obtained using the idea from the proof of \cite[Theorem 13.15]{Apostol}. We proceed as in the proof of \cite[Theorem 8.21]{Apostol}. By \cite[Theorem 8.20]{Apostol}:
\begin{align*}
\chi (m)=\frac{\tau_{q}(\chi)}{\sqrt{q}}\sum_{k=1}^{q}\bar{\chi}(k)e^{-2\pi imk/q},
\end{align*}
where $\tau_{q}(\chi)$ has absolute value $1$. Thus
\begin{align*}
\sum_{m\leq x}\chi (m)=\frac{\tau_{q}(\chi)}{\sqrt{q}}\sum_{k=1}^{q}\bar{\chi}(k)\sum_{m\leq x}e^{-2\pi imk/q}
\end{align*}
and
\begin{align*}
\sqrt{q}\left|\sum_{m\leq x}\chi (m)\right| \leq \sum_{k=1}^{q}|\chi(k)|\left|\sum_{m\leq x}e^{-2\pi imk/q}\right|.
\end{align*}
Let 
\begin{align*}
f(k)=\sum_{m\leq x}e^{-2\pi imk/q}.
\end{align*}
Then $|f(k)|=|f(q-k)|$ and $(q,k)=(q,q-k)$. Thus,
\begin{align*}
\sqrt{q}\left|\sum_{m\leq x}\chi (m)\right| \leq 2\sum_{1\leq k\leq q/2}|\chi(k)||f(k)|.
\end{align*}

Then it is proved in \cite{Apostol} that $|f(k)|\leq \frac{q}{2k}$. We also have $\chi (k)=0$ if $(k,q)>1$. Therefore we get
\begin{align*}
\sqrt{q}\left|\sum_{m\leq x}\chi (m)\right|\leq q\sum_{1\leq k\leq q/2}\frac{|\chi(k)|}{k}\leq q\sum_{\substack{1\leq k\leq q \\ (k,q)=1}}\frac{1}{k}.
\end{align*}
The latter expression is bounded using Corollary \ref{MAINCOR}:
\begin{align*}
\sqrt{q}\left|\sum_{m\leq x}\chi (m)\right|\leq \varphi (q)\big(\log q + \log\log q + \log\log\log q +4\big).
\end{align*}
The result follows after dividing both sides by $\sqrt{q}$.
\end{proof}

\end{document}